
\documentclass{article}
 
\usepackage[latin1]{inputenc}
\usepackage[T1]{fontenc}
\usepackage[francais, english]{babel}
\usepackage{lmodern}
\usepackage{stmaryrd}
\usepackage{amsthm}
\usepackage{amsmath}
\usepackage{amssymb}
\usepackage{mathrsfs}
\usepackage{soul}
\usepackage{layout}
\usepackage{setspace}
\usepackage[top=3cm, bottom=3cm, left=3.3 cm, right=3.3 cm]{geometry}
\usepackage{pgf,tikz}
\usepackage{graphicx}
\usepackage{xcolor}
\usetikzlibrary{arrows}
\usepackage{listings}
\usepackage{enumitem}
\usepackage{hyperref}
\usepackage{cases}
\usepackage{mathabx}
\usepackage{cleveref}
\usepackage{xcolor}
\usepackage{appendix}
\usepackage{fancyhdr}
\usepackage{sectsty}
\usepackage{comment}

\usepackage{caption}
\usepackage{graphicx, threeparttable}
 \DeclareCaptionFormat{sanslabel}{#3}%

\newtheorem{th.}{Theorem}[section]
\newtheorem{lemme}[th.]{Lemma}
 
\newtheorem*{lemme*}{Lemma} 
\newtheorem*{pgd*}{Principe des grandes déviations} 
\newtheorem{prop}[th.]{Proposition}
\newtheorem*{prop*}{Proposition}
\newtheorem*{prop5.3bis*}{Proposition 5.3 bis}
\newtheorem*{def*}{Definition}
\newtheorem*{cor*}{Corollary}
\newtheorem{cor.}[th.]{Corollary}

\newtheorem*{rem*}{Remark}
\newtheorem*{rem.}{Remark}
\newtheorem*{ex.}{Example}
\newtheorem*{th*}{Theorem}

\newtheorem*{thderiv-hypsurf}{\Cref{drift-hypsurf}}
\newtheorem*{threc-hypsurf}{\Cref{rec-hypsurf}}
\newtheorem*{th.*}{Théorème}
\newtheorem*{thderiv1}{ \Cref{TH-deriv}}
\newtheorem*{thderivnonpath}{ \Cref{drift-non path}}

\newcommand{\C}{\mathbb{C}}
\newcommand{\R}{\mathbb{R}}
\newcommand{\Z}{\mathbb{Z}}

\newcommand{\N}{\mathbb{N}}

\newcommand{\ad}{\text{ad}}

\newcommand{\supp}{\text{supp }}

\setcounter{secnumdepth}{4}
\setcounter{tocdepth}{3}

\subsectionfont{\fontsize{13}{17}\selectfont}

\subsubsectionfont{\fontsize{12}{15}\selectfont}

\makeatletter
\newcounter {subsubsubsection}[subsubsection]
\renewcommand\thesubsubsubsection{\thesubsubsection .\@alph\c@subsubsubsection}
\newcommand\subsubsubsection{\@startsection{subsubsubsection}{4}{\z@}%
                                     {-3.25ex\@plus -1ex \@minus -.2ex}%
                                     {1.5ex \@plus .2ex}%
                                     {\normalfont\large\bfseries}}
\newcommand*\l@subsubsubsection{\@dottedtocline{3}{10.0em}{4.1em}}
\newcommand*{\subsubsubsectionmark}[1]{}
\makeatother

\title{Drift of random walks on abelian covers of finite volume homogeneous spaces}
\author{Timothée Bénard}
\date{}

\begin{document}
\large
\maketitle






\pagestyle{plain}

\bigskip

\begin{abstract} 
Let $G$ be a connected simple real Lie group, $\Lambda_{0}\subseteq G$ a lattice without torsion and $\Lambda \unlhd \Lambda_{0}$ a normal subgroup such that $\Lambda_{0}/\Lambda\simeq \Z^d$. We study the drift of a random walk on the $\Z^d$-cover $\Lambda\backslash G$ of  the finite volume homogeneous space $\Lambda_{0}\backslash G$. This walk is defined by a Zariski-dense compactly supported probability measure $\mu$ on $G$.  We first assume the covering map $\Lambda\backslash G\rightarrow \Lambda_{0}\backslash G$ does not unfold any cusp of  $\Lambda_{0}\backslash G$ and compute the drift at \emph{every} starting point. Then we remove this assumption and describe the drift almost everywhere. The case of hyperbolic manifolds of dimension 2 stands out with non-converging type behaviors. The recurrence of the trajectories is also characterized in this context.
\end{abstract} 

\bigskip

\baselineskip=15pt

\large
\section*{Introduction}
\bigskip

$\Z^d$-covers of finite volume hyperbolic surfaces are important examples to study dynamical systems in infinite measure. They have the advantage of being very concrete,  allow to use  tools developed for  finite volume spaces via the periodicity of the covering, but still present a true complexity leading sometimes to unexpected developments. 

Such covers have been extensively studied in terms of their geodesic flow or horocycle flow.  Let us mention the work of Sinai \cite{Sinai60}, followed by Le Jan, Guivarc'h, Enriquez \cite{LeJan92, GuivarchLeJan93, LeJan94,  Enr-LeJan} revealing that the distribution of a fundamental domain spreading under the action of the geodesic flow obeys a central limit theorem (partially Gaussian, partially Cauchy). On a similar subject, Oh and Pan  obtained a local  limit theorem in  \cite{PanOh, Panlocalmixing}.  Concerning the horocycle flow, spectacular progress have been achieved by Sarig in \cite{Sar}, adding the final piece to the classification of  Radon invariant measures initiated by Babillot-Ledrappier \cite{BabLed, Bab-classif}, and realizing a first step toward an extension of Ratner theory for infinite volume homogeneous spaces.

 The present paper adopts a different point of view, looking into the dynamics of  random walks on  $\Z^d$-covers of  finite volume homogeneous spaces. Even if the dynamics  is no longer deterministic, it shares strong relations with the works cited above, which will be key ingredients in several proofs below.  Recent investigations  have  been carried out in this context by Conze and Guivarch', who described in \cite{ConGui}    the recurrence properties of symmetric random walks when the base of the cover is compact.  Our purpose is to examine the rate of escape of a walk trajectory at linear scale, in the spirit of a law of large numbers. Loosely speaking we will show that the drift of a (not necessarily symmetric) walk is always null,  except when  the base of the cover is a hyperbolic surface with unfolded cusps, in which case  non converging  behaviors occur.

\bigskip

The homogeneous spaces we consider are given by quotients of quasi-simple Lie groups. We denote by $G$  a connected real Lie group with simple Lie algebra and finite center, $\Lambda_{0}\subseteq G$ a  lattice without torsion,  $\Lambda\unlhd \Lambda_{0}$ a normal subgroup such that  $\Lambda_{0}/\Lambda\simeq \Z^d$, and look into the  quotient space $X=\Lambda \backslash G$, which is a $\Z^d$-cover of the finite volume homogeneous space $X_{0}=\Lambda_{0}\backslash G$.   This context is actually very explicit as we can assume without loss of generality that $G=SO_{e}(1,m)$ or $G=SU(1,m)$ for some integer $m\geq 2$ (see \ref{sec1.1}). 
In this case,  $X_{0}$ corresponds to the orthonormal frame bundle of a real or complex hyperbolic manifold  of finite volume $M_{0}$, and $X$ to the orthonormal  frame bundle of some $\Z^d$-cover $M$ of $M_{0}$.

To define a random walk on $X$ or $X_{0}$, we choose a probability measure $\mu$ on $G$ and define the transitional probability measure at a point $x$ as the convolution 
$\delta_{x}\star \mu$, i.e. as the image of $\mu$ by the map $g\mapsto xg$. Set $B=G^{\N^\star}$, $\beta=\mu^{\otimes \N^\star}$. A typical trajectory for the $\mu$-walk starting from a point $x\in X$ is thus obtained  by choosing an element $b\in B$ with law $\beta$ and considering the sequence $(xb_{1}\dots b_{n})_{n\geq 0}\in X^\N$. In the following, we will always assume that the support of the measure $\mu$ is \emph{compact} and generates a \emph{Zariski-dense} sub-semigroup $\Gamma_{\mu}$ in $G$. 


To express the rate of escape of the $\mu$-walk on $X$, we fix a fundamental domain $D$ for the action of $\Z^d$ on $X$ such that $D$ lifts well the cusps of $X_{0}$ (see \ref{fund}), and some $\Z^d$-equivariant  measurable map $i : X \rightarrow \R^d$ that is bounded on $D$. The role of this map is to associate  to every point $x\in X$ some coordinates $i(x)$ in $\R^d$ that quantify which $\Z^d$-translate of $D$ contains $x$.

The law of large numbers now inspires the following questions. Let $(xb_{1}\dots b_{n})_{n\geq 0}$ be a typical trajectory of the $\mu$-walk on $X$. 

\bigskip

\emph{Does the sequence  $\frac{1}{n}i(xb_{1}\dots b_{n})_{n\geq 1}$ converge in $\R^d$? And if so, what is the limit? }

\bigskip

To discuss this issue, we first clarify the notion of \emph{drift}.  Let $x\in X$ be a point on $X$ and $E$ a subset of $\R^d$.  We say the $\mu$-walk with starting point $x$ has drift $E$ if for $\beta$-almost every $b\in B$,  the set of accumulation points of the normalized  sequence of positions $\left(\frac{1}{n}i(xb_{1}\dots b_{n})\right)_{n\geq 1}$ is equal to $E$. When $E$ is a singleton, we recover the more common definition of drift, as it appears in the classical law of large numbers. However, we will see examples where $E$ is not reduced to a point.  

The  drift of the $\mu$-walk on $X$ is linked with the dynamical properties of a cocycle on $X_{0}\times G$ that we now introduce. Notice first that  each step of the walk yields a variation of the index of position given by the cocycle 
$$ X\times G\rightarrow \R^d, \,\,(x,g)\mapsto i(xg)-i(x)$$
The assumption that $i$ commutes with the action of $\Z^d$ implies this cocycle is $\Z^d$-invariant, hence defines a quotient \emph{drift cocycle} :
 $$\sigma : X_{0}\times G\rightarrow \R^d, \,\,(x+\Z^d,g)\mapsto i(xg)-i(x)$$
As for $x\in X$, $b\in B$, one has $i(xb_{1}\dots b_{n})=\sigma( x+\Z^d, b_{1}\dots b_{n})+i(x)$,  the drift at $x$ is also the set of accumulation points of  $\beta$-typical sequences $\left(\frac{1}{n}\sigma(x+\Z^d, b_{1}\dots b_{n})\right)_{n \geq 1}$. Hence, we will freely talk about  the drift of the $\mu$-walk at a point that is not on $X$, but on $X_{0}$. 

Now, the ergodicity\footnote{It is a consequence of  \cite[Proposition 2.8]{BQRW}, the strict convexity of balls in a Hilbert space, and Howe-Moore Theorem \cite[Theorem 2.2.15]{Zim84}.}of the $\mu$-walk on $X_{0}$ with respect to the Haar probability measure $\lambda_{0}$ gives a first answer to our questions :  \emph{if $\sigma$ is $\lambda_{0}\otimes \mu$-integrable}, then by Birkhoff Ergodic Theorem, for $\lambda_{0}$-almost every $x_{0}\in X_{0}$, $\beta$-almost every $b\in B$, 
$$ \frac{1}{n}\sigma(x_{0}, b_{1}\dots b_{n}) \underset{n\to +\infty}{\longrightarrow} \int_{X_{0}\times G}\sigma(x,g) \,d\lambda_{0}(x) d\mu(g) $$
In other words, the drift of the $\mu$-walk on $X$ is almost everywhere well defined and equal to the expectation of the drift cocycle $\sigma$ for the probability  measure $\lambda_{0}\otimes \mu$.

\bigskip

\Cref{sec2} strengthens this conclusion under the more precise assumption that the cusps of the base $X_{0}$ are not unfolded in the cover $X$ (see Figure 1). In this case,  the drift is null at every point in $X_{0}$ whose $\Gamma_{\mu}$-orbit is infinite, and can be computed explicitly otherwise. In particular it is well-defined, explicit  for \emph{every starting point}, and null outside of a countable subset of $X_{0}$. This result draws a sharp contrast with the usual law of large numbers on $\R^d$ where the drift depends strongly on the law of increment.  The proof relies on the classification of  stationary probability measures given by Benoist-Quint   \cite{BQII}. 

\bigskip

 \begin{thderiv1}
 

Assume no cusp of $X_{0}$ is unfolded in $X$. Given $x_{0} \in X_{0}$, we may distinguish two cases :

 \smallskip
 
 \begin{enumerate}
 \item If the orbit $x_{0}\Gamma_{\mu}$ is infinite in $X_{0}$ then the drift is null : for $\beta$-almost every $b \in B$, one has 
 $$\frac{1}{n}\sigma(x_{0}, b_{1}\dots b_{n})\underset{n\to +\infty}{\longrightarrow} 0$$

 \item If the orbit $x_{0}\Gamma_{\mu}$ is finite in $X_{0}$, then for $\beta$-almost every $b \in B$, one has $$\frac{1}{n}\sigma(x_{0}, b_{1}\dots b_{n})\underset{n\to +\infty}{\longrightarrow} \mu\otimes\nu_{0}(\sigma)$$
 
 where $\nu_{0}$ is  the uniform probability measure on $x_{0}\Gamma_{\mu}$ given by 
$\nu_{0} := \frac{1}{\sharp x_{0}\Gamma_{\mu} }\sum_{y \in x_{0}\Gamma_{\mu}}\delta_{ y}$.
 \end{enumerate}

 \end{thderiv1}

 \begin{figure}[!h]\centering \captionsetup{format=sanslabel}
\begin{measuredfigure}
\includegraphics[scale = 1.5]{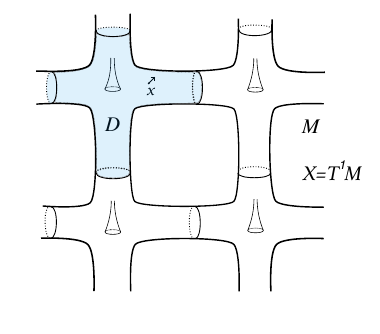}
\caption{Figure 1 : Example where $G=SO_{e}(1,2)$, $M_{0}$ is a hyperbolic surface of genus 2 with one cusp, and $M$ is a $\mathbb{Z}^2$-cover of $M_{0}$ that does not unfold the cusp. }
\end{measuredfigure}
\end{figure}

\bigskip

\bigskip

\Cref{sec3} is independent from \Cref{sec2} and characterizes the drift at almost every point of $X$, without making  assumptions on the cusps of $X_{0}$.  We see in \Cref{Case1} that $\sigma$ is still $\lambda_{0}\otimes\mu $-integrable if $G\neq SO_{e}(1,2)$ and infer with a short argument that the drift of a typical trajectory is null.

\bigskip
\begin{thderivnonpath}
 If $G\neq SO_{e}(1,2)$, then for almost every $x_{0}\in X_{0}$, $\beta$-almost every $b\in B$, 
$$\frac{1}{n}\sigma(x_{0}, b_{1}\dots b_{n})\underset{n\to +\infty}{\longrightarrow}0 $$

\end{thderivnonpath}

\bigskip

However, a radically different behavior can be observed if $G=SO_{e}(1,2)$. In this case, $X$ corresponds to the unit tangent bundle of a $\Z^d$-cover $S$ of a finite volume hyperbolic surface $S_{0}$.  We show that for almost every $x\in X\equiv T^1S$, $\beta$-almost every $b\in B$, the sequence $\left(\frac{1}{n}\sigma(x, b_{1}\dots b_{n})\right)_{n\geq 1}$ needs not converge in $\R^d$, but accumulates over a subspace of $\R^d$ generated by the direction of translations above the unfolded cusps of $S_{0}$. 
More precisely, denote by $\mathscr{C}_{1}, \dots, \mathscr{C}_{s}$ the cusps of $S_{0}$, and by $v_{1}, \dots,v_{s}\in \Z^d$ the translations obtained by lifting to $S$ simple closed curves in $S_{0}$ homotopic to  the  cusps.  Set $E_{C}=\text{Vect}_{\R}\{v_{1},\dots v_{s}\}$ the sub-vector space of $\R^d$ spanned by the $v_{i}$'s.

\bigskip

\begin{thderiv-hypsurf}
The drift of the $\mu$-walk on $T^1S$ is almost everywhere equal to  $E_{C}$ :  for almost every $x_{0}\in T^1S_{0}$, for $\beta$-almost every $b\in B$,
$$\left\{\text{accumulation points of the sequence $\left(\frac{1}{n}\sigma(x_{0}, b_{1}\dots b_{n})\right) _{n\geq 1}$}\right\} = E_{C} $$
\end{thderiv-hypsurf}

\bigskip
For example, if  $S$ is a $\Z$-cover of a hyperbolic three-holed sphere, then the drift of the walk is almost everywhere  equal to $\R$.

\begin{figure}[h]\centering \captionsetup{format=sanslabel}
\begin{measuredfigure}
\includegraphics[scale=0.8]{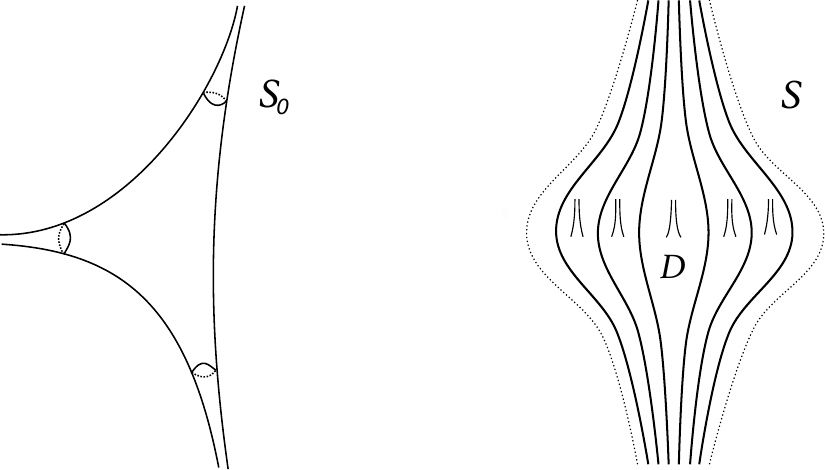}
\caption{Figure 2 : A hyperbolic three-holed  sphere $S_{0}$  and a $\mathbb{Z}$-cover $S$}
\end{measuredfigure}
\end{figure}

\bigskip
The key to \Cref{drift-hypsurf} is a result by Enriquez and Le Jan that describes the statistic of the winding of the geodesic flow around the different cusps of $S_{0}$ \cite{Enr-LeJan}. We infer a \emph{central limit theorem} stating that the random variable $\frac{1}{n}\sigma(x_{0},b_{1}\dots b_{n})$, where $x$ varies with law $\lambda_{0}$ and $b$ varies with law $\beta$, behaves for large $n$ as a centered Cauchy distribution on $E_{C}$ (\Cref{Cauchy}).


Our proof of \Cref{drift-hypsurf}  also yields a characterization of $\Z^d$-covers of finite volume hyperbolic surfaces for which the $\mu$-walk is recurrent.   

\begin{threc-hypsurf}
The $\mu$-walk on $T^1S$ is recurrent and ergodic if and only if $d=1$, or $d=2$ and $\dim E_{C}=0$; and it is transient otherwise.
\end{threc-hypsurf}

This last result extends  Conze-Guivarc'h recurrence criterion mentioned above which dealt with the case where $\mu$ is symmetric and $S_{0}$ is compact \cite[Proposition 4.5]{ConGui}.  We recently generalized  \Cref{rec-hypsurf} in \cite{Tim-recurrence} to the context of random walks on rank one symmetric spaces.   The proof is however much more involved than in the present case, and uses a different technique, approximating  $\mu$-trajectories by their limit geodesic ray.

\bigskip

\emph{Structure of the paper.}

\Cref{sec1} depicts the geometry of the homogeneous spaces  we consider, defining notably the notion of cusp, and of good fundamental domain.

\Cref{sec2} deals with  $\Z^d$-covers $X$ which do not unfold the cusps of their base $X_{0}$, and proves \Cref{TH-deriv} characterizing the drift of a walk at every starting point. 

\Cref{sec3} makes no assumption on the cusps of the base $X_{0}$ and describes the drift of a walk at almost every starting point. We start with the proof of \Cref{drift-non path} then focus on the case where the base $X_{0}$ is the unit bundle of a hyperbolic surface with unfolded cusps. We  prove in \Cref{Cauchy} a central limit theorem  for the drift cocycle $\sigma$, and deduce Theorems \ref{drift-hypsurf} and \ref{rec-hypsurf}.

\bigskip

\tableofcontents

\newpage
\section{Geometry of the ambient space} \label{sec1}

We first explain in greater details the geometric notions involved in this paper.

\subsection{Real or complex hyperbolic manifolds}  \label{sec1.1}
As above, let $G$ be a connected Lie group with simple Lie algebra and finite center, and $\Lambda \unlhd \Lambda_{0} \subseteq G$ torsion-free discrete subgroups of $G$, such that $\Lambda_{0}$ has finite covolume in $G$  and  $\Lambda_{0}/\Lambda \simeq\Z^d$ (where $d\in \N^\star$).
The existence of such subgroups $\Lambda$ and $\Lambda_{0}$ forces the Lie algebra $\mathfrak{g}$ of $G$ to be isomorphic to $\mathfrak{so}(1, m)$ or $\mathfrak{su}(1,m)$ for some $m\geq 2$ [Indeed, if it is not the case, then the group $G$ has Kazhdan's property (T) (\cite{BekHarVal}, page 8), as does $\Lambda_{0}$ which is a lattice in $G$, and therefore its quotient $\Lambda_{0}/\Lambda \simeq\Z^d$. Absurd]. Hence, we may assume without loss of generality that 
$$G=SO_{e}(1,m) \,\,\, \,\,\, \text{or} \,\,\, \,\,\,G=SU(1,m) $$

Denote by $K\subseteq G$ a maximal compact subgroup of $G$,    e.g.  $K:=\{1\}\times SO(m)$ in the first case  or  $K:=S(U(1)\times U(n))\simeq U(n)$ in the second case. Let $\mathfrak{g}, \mathfrak{k}$ the respective Lie algebras of $G$ and $K$, set $\mathfrak{s}=\mathfrak{k}^{\perp}$ the orthogonal of $\mathfrak{k}$ in $\mathfrak{g}$ for the Killing form $B : \mathfrak{g}\times \mathfrak{g}\rightarrow \R$. We then have a decomposition $\mathfrak{g}=\mathfrak{k}\oplus \mathfrak{s}$ and $B$ is negative (resp. positive) definite on $\mathfrak{k}$ (resp. $\mathfrak{s}$). Now endow the quotient space $G/K$ with the unique riemannian metric that is left $G$-invariant and coincides with $B_{|\mathfrak{s}\times \mathfrak{s}}$ on $T_{e}G/K\equiv \mathfrak{s}$. The resulting riemannian manifold is the \emph{symmetric space} associated to $G$ (see \cite{Hel-sym}).

In the case where $G=SO_{e}(1,m)$, this construction leads to the real hyperbolic space of dimension $m$. The double-quotient $M_{0}=\Lambda_{0}\backslash G/K$ is a finite volume hyperbolic manifold, $M=\Lambda \backslash G/K$ is a $\Z^d$-cover of $M_{0}$ and the spaces $X_{0}:=\Lambda_{0} \backslash G$ and  $X:= \Lambda  \backslash G $ correspond to the orthonormal frame bundles of  $M_{0}$ and $M$. If $G=SU(1,m)$, we can draw the same conclusions except that one needs to consider this time complex hyperbolic manifolds.

\subsection{The notion of cusp} \label{sec1.2}

We first recall the Iwasawa decomposition \cite[5.1]{Hel-sym}. Let $v_{0}\in \mathfrak{s}$ be some non-zero element of $\mathfrak{s}$ and write $\mathfrak{a}=\R v_{0}$ the associated Cartan subspace. The adjoint action $\ad v_{0} : \mathfrak{g}\rightarrow \mathfrak{g}$ is diagonalizable. Set $\mathfrak{n}\subseteq \mathfrak{g}$ the sum of its eigenspaces with strictly positive eigenvalue. All the elements in $\mathfrak{n}$ are nilpotent matrices. We set $A:=\exp(\mathfrak{a})$ and  $N:=\exp(\mathfrak{n})$ the unique connected Lie subgroups of $G$ of  Lie algebras $\mathfrak{a}$ and $\mathfrak{n}$. For $t\in \R$, write also $a_{t}:=\exp(tv_{0})$.

\begin{th*}[Iwasawa decomposition]
The map 
$$N\times \R \times K \rightarrow G, \,\,(n,t,k)\mapsto na_{t}k$$
is a diffeomorphism.
\end{th*}

Let us now recall the notion of cusp for the finite volume manifold $M_{0}$. One can partition $M_{0}$ as $$M_{0}= L_{0}\sqcup  \bigsqcup_{j=1,\dots, q} \mathscr{C}_{j}$$
where $L_{0}$ is a compact submanifold with boundary, and each  $\mathscr{C}_{j}$ is an open subset whose structure is as follows. Denote by $\pi_{0}: G \rightarrow M_{0}$ the projection map, and  set $A^{++}=\{a_{t}, t>0\}$. Then there exists an element $g_{j}\in G$ such that the restriction 
$$\pi_{0} : g_{j}NA^{++} \rightarrow \mathscr{C}_{j}$$ 
is well defined, surjective. Moreover, if one sets $N^{\Lambda_{0}}_{j}=\{g\in \Lambda_{0}, gg_{j}N=g_{j}N\}$, then the action of $N^{\Lambda_{0}}_{j}$ on $g_{j}N$ is properly discontinuous, free,  cocompact, and $\pi_{0}$ defines a quotient diffeomorphism : 
$$\pi_{0} :  N^{\Lambda_{0}}_{j} \backslash g_{j}N A^{++} \rightarrow \mathscr{C}_{j}$$

\bigskip

A cusp $\mathscr{C}_{j}$ in $M_{0}$ is said to be \emph{unfolded} in $M$ if there exists  a  closed path on $\mathscr{C}_{j}$ whose lift to $M$ is not closed.  We can reformulate this notion algebraically. Set $N^{\Lambda}_{j}=\{g\in \Lambda, gg_{j}N=g_{j}N\}$. Then $N^{\Lambda}_{j}$  is a normal subgroup of $N^{\Lambda_{0}}_{j}$ and $N^{\Lambda_{0}}_{j}/N^{\Lambda}_{j} \hookrightarrow \Z^d$. The cusp $\mathscr{C}_{j}$ is unfolded in $M$ if and only if one has a strict inclusion $N^{\Lambda}_{j} \subsetneq N^{\Lambda_{0}}_{j}$

\bigskip

\subsection{Fundamental domain} \label{fund}

The riemannian manifold $M$ admits a free and proper action of $ \Z^d$ by isometries such that $\Z^d \backslash M\simeq M_{0}$. We  denote by $D\subseteq M$ a fundamental domain for this action. This means that  $D$ is a closed  subset of $M$ such that :

\begin{itemize} 
\item[1)] the  translates $(D+k)_{k\in \Z^d}$ form a locally finite covering of $M$
\item[2)] their interiors $(\overset{\circ}{D}+k)_{k\in \Z^d}$ are mutually disjoint and have their  union of full measure in $M$
\end{itemize}

Sometimes, we will also denote by $D$ the corresponding $K$-invariant subset of $X$. 

\bigskip
Assumptions 1) and 2) are classical but not sufficient to avoid  pathological behaviors which are irrelevant for our description of the drift. Hence, we will also ask that for every point  $p\in D$ whose projection on $M_{0}$ belongs to  a cusp boundary $\partial\mathscr{\mathscr{C}}_{j}$, the associated geodesic ray going to infinity in $\mathscr{C}_{j}$ is also contained in $D$ (as in Figure 1 or 2). More precisely, we say that a fundamental domain $D$ is \emph{good} if :

\begin{itemize} 
\item[3)] for every $j\in\{1,\dots, q\}$, there exists a fundamental domain $F^0_{j}\subseteq g_{j}N$ for the action of $N^{\Lambda_{0}}_{j}$ on $g_{j}N$ such that the domain $F^0_{j}A^{++}$ projects onto the preimage of $\mathscr{C}_{j}$ in $D$. 
\end{itemize}

A first pleasant feature of this additional assumption is the following observation. 
If $D$ is a good fundamental domain,  $i: M\rightarrow \R^d$ is a measurable $\Z^d$-equivariant map that is bounded on $D$, and $(p_{n})_{n \geq 0 }\in M^\N$ is a sequence of points in $M$, then \emph{the asymptotic behavior of $\frac{1}{n}i(p_{n})$ does not depend on the choices of $D$ or $i$. In other words the notion of drift is intrinsic}. Indeed, if $(D',i')$ is a couple of other candidates for $D$ and $i$, then $D$ is covered by a finite number of translates $(D'+k)_{k\in \Z^d}$ (consequence of 3 and the compactness of $L_{0}$). Thus $i'$ is bounded on $D'$, and by $\Z^d$-equivariance of $i$ and $i'$, the difference $i-i'$ is globally bounded, which proves the claim.



\section{Drift at every point} \label{sec2}

Let $M=\Lambda \backslash G/K$ be a $\Z^d$-cover of a finite volume - real or complex  - hyperbolic manifold $M_{0}=\Lambda_{0} \backslash G/K$ as in \Cref{sec1.1}, and $X=\Lambda\backslash G$, $X_{0}=\Lambda_{0}\backslash G$ their orthonormal frame bundles. We assume that the cusps of $M_{0}$ are not unfolded in $M$ and characterize the drift of a random walk on $X$ for \emph{every} starting point.

\subsection{Statement and consequences}

We first recall and comment on  \Cref{TH-deriv}.

\bigskip

Fix a probability measure $\mu$  on $G$ whose support is compact and generates a Zariski-dense semigroup $\Gamma_{\mu}$ in $G$. Set $B=G^{\N^\star}$, $\beta=\mu^{\otimes \N^\star}$. Let $D$ be a good fundamental domain for the $\Z^d$-action on $M$ and denote by $i: M\rightarrow \R^d$  a measurable $\Z^d$-equivariant map that is bounded on $D$. We also see $i$ as a $\Z^d$-equivariant $K$-invariant  map on $X$ and write $\sigma : X_{0}\times G \rightarrow \R^d$ the associated drift cocycle (cf.  introduction). The goal of the section is to show the following.

 \begin{th.}\label{TH-deriv}

Assume that no cusp of $M_{0}$ is unfolded in the cover $M$. Given $x_{0} \in X_{0}$, we may distinguish two cases :
 \smallskip
 
 \begin{enumerate}
 \item If the orbit $x_{0}. \Gamma_{\mu}$ is infinite in $X_{0}$ then the drift is null : for $\beta$-almost every $b \in B$, one has 
 $$\frac{1}{n}\sigma(x_{0},b_{1}\dots b_{n})\underset{n\to +\infty}{\longrightarrow} 0$$

 \item If the orbit $x_{0}.\Gamma_{\mu}$ is finite in $X_{0}$, then for $\beta$-almost every $b \in B$, one has $$\frac{1}{n}\sigma(x_{0},b_{1}\dots b_{n})\underset{n\to +\infty}{\longrightarrow} \nu_{0}\otimes \mu(\sigma)$$
 
 where $\nu_{0}$ is  the uniform probability measure on $x_{0}. \Gamma_{\mu}$ given by 
$\nu_{0} := \frac{1}{\sharp \Gamma_{\mu}.x_{0} }\sum_{y \in \Gamma_{\mu}.x_{0}}\delta_{ y}$.
 \end{enumerate}
 
 \end{th.}

As a consequence of \Cref{TH-deriv}, we obtain that the drift is null at every point  if the probability measure $\mu$ is symmetric, i.e. if $\mu$ is invariant under the inversion map $G\rightarrow G, g\mapsto g^{-1}$.

\bigskip

\begin{cor.}
If no cusp of $M_{0}$ is unfolded in the cover $M$, and if the probability measure $\mu$ is symmetric, then for every $x_{0}\in X_{0}$, $\beta$-almost every $b \in B$, one has $$\frac{1}{n}\sigma(x_{0}, b_{1}\dots b_{n})\underset{n\to +\infty}{\longrightarrow} 0$$

\end{cor.}

\bigskip
\begin{proof}[Proof of the corollary]
According to \Cref{TH-deriv}, we just need to check that if $x_{0}$ has  finite $\Gamma_{\mu}$-orbit, then $ \nu_{0}\otimes\mu(\sigma)=0$. Set $\omega :=x_{0}.\Gamma_{\mu} \subseteq X_{0}$. Using the symmetry of $\mu$ and the  $\Gamma_{\mu}$-invariance of $\nu_{0}$, one computes that  
\begin{align*}
\nu_{0} \otimes \mu (\sigma) &= \frac{1}{2} \int_{X_{0} \times G} \sigma(y_{0},g) +\sigma(y_{0}, g^{-1}) d\mu(g) d\nu_{0}(y_{0})\\
         &= \frac{1}{2} \int_{G \times X_{0}} \sigma(y_{0},g) +\sigma(y_{0}g, g^{-1}) d\mu(g) d\nu_{0}(y_{0})\\
        &= 0
\end{align*}
as  $\sigma(y_{0},g) +\sigma(y_{0}g, g^{-1})=0$ for all $y_{0} \in X_{0}$, $g\in G$, according to the cocycle relation.

\end{proof}

\bigskip

\bigskip

\noindent{\bf Example with a non zero drift.}
We construct such an example in the case where $G= SO_{e}(1,2) \equiv PSL_{2}(\R)$, i.e. by considering an abelian cover of a hyperbolic  surface. Let $S_{0}$ be a compact hyperbolic surface of genus $2$. Denote by $c_{1},c_{2}$ the two simple closed (non-parametrized) geodesic curves on $S_{0}$ pictured in the figure below, and $p_{0}\in S_{0}$ their unique point of intersection. Up to choosing a good surface $S_{0}$, one can assume these curves intersect orthogonally at $p_{0}$. 

\bigskip
\begin{figure}[h]\centering \captionsetup{format=sanslabel}
\begin{measuredfigure}
\includegraphics[scale=0.9]{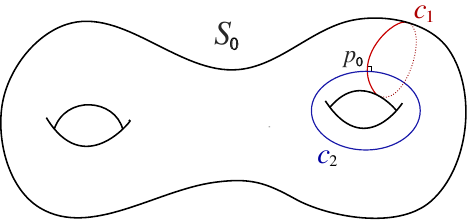}
\caption{Figure 3 : The hyperbolic surface $S_{0}$ and the geodesic curves $c_{1},c_{2}$ intersecting at $p_{0}$ }
\end{measuredfigure}
\end{figure}

\bigskip
Now construct a $\Z$-cover of $S_{0}$ in the following way. Cut $S_{0}$ along the curve $c_{1}$ to get a hyperbolic surface $\Sigma_{0}$ of genus $1$, with two boundary components  $\delta_{1}$, $\delta_{2}$. Let $(\Sigma_{i})_{i\in \Z}$ be copies of this surface, and denote by $\delta_{i,1}$, $\delta_{i,2}$ the corresponding boundary components. Now glue together the surfaces  $\Sigma_{i}$, by identifying each boundary component $\delta_{i,2}$ with the $\delta_{i+1,1}$ (and in the same way as $\delta_{2}$ identifies to $\delta_{1}$). The surface obtained is denoted by $S$, and can be seen naturally as a  $\Z$-cover of $S_{0}$. Notice also that the geodesic segments corresponding to $c_{2}$ in each $\Sigma_{i}$ are glued together in the process into a global geodesic arc on $S$ that connects the two ends of the cover.

We now define a suitable  random walk on $T^1S$. Given $t, \theta \in \R$, set 
$$a_{t}:=\begin{bmatrix}e^{t/2}& 0 \\ 0&e^{-t/2} \end{bmatrix}, \,\,\,\,\,\,\,\,\,\,\,\,
R_{\theta} :=\begin{bmatrix}\cos(\theta/2)& -\sin(\theta/2) \\ \sin(\theta/2)&\cos(\theta/2) \end{bmatrix}$$

If we denote by $X_{0}$ the unit tangent bundle of $S_{0}$, and see it as a homogeneous space  $X_{0}=\Lambda_{0} \backslash G$  where $\Lambda_{0}\subseteq G$ is a torsion-free discrete subgroup, then the (right-) action of an element $a_{t}$ on a vector $x_{0}\in X_{0}$ moves  $x_{0}$
as the geodesic flow at time $t$, whereas $R_{\theta}$ rotates $x_{0}$ by an angle $\theta$ without moving its base point. Denote by $l_{1}, l_{2}\in \R_{>0}$ the length of the geodesic curves $c_{1}, c_{2}$. We define a probability measure on $G$ by setting : 
$$\mu:= \frac{1}{2}(\delta_{g_{1}}+\delta_{g_{2}}) \,\,\,\,\,\,\text{ where $g_{1}:=R_{-\pi/2}a_{l_{1}}R_{\pi/2}$, and $g_{2}:=a_{l_{2}}$}$$.

As the matrices $g_{1}, g_{2}$ are hyperbolic and do not commute with each other, they must generate a Zariski-dense sub-semigroup $\Gamma_{\mu}$  in $G$. Consider a vector  $x_{0} \in X_{0}$  with base point $p_{0}$ and direction tangent to $c_{2}$ (hence orthogonal to $c_{1}$). The $\Gamma_{\mu}$-orbit of $x_{0}$ is reduced to a point : $x_{0}.\Gamma_{\mu}=x_{0}$. Moreover, for any map $i$  defining the drift cocycle $\sigma$, we have by $\Z$-equivariance $\sigma(x_{0}, g_{1})=0$ and $\sigma(x_{0}, g_{2})= 1$ (up to choosing rather $-x_{0}$ from the start). Hence, for $\beta$-almost every $b \in B$, 
$$\sigma(x_{0}, b_{1}\dots b_{n})= \sharp\{j\leq n, \,b_{j}=g_{2}\} \simeq \frac{n}{2}$$

The drift is $1/2$.

\bigskip
\subsection{Proof of \Cref{TH-deriv}}

Let us prove \Cref{TH-deriv}.  We recall the notations $B:=G^{\N^\star}$, $\beta:=\mu^{\otimes \N^\star}$, and set $T:B\rightarrow B, (b_{i})_{i\geq 1}\mapsto (b_{i+1})_{i\geq 1}$ the one-sided shift.

\bigskip

\bigskip

We first deal with the \underline{second case} as it is elementary. Let $x_{0}\in X_{0}$ be a point whose $\Gamma_{\mu}$-orbit $\omega:=x_{0}. \Gamma_{\mu}$ is finite. The  uniform probability measure  $\nu_{0}$ on $\omega$ is then $\mu$-stationary and ergodic. Hence the  dynamical system $(B^{X_{0}}, \beta^{X_{0}}, T^{X_{0}})$ defined by 
\begin{align*}
B^{X_{0}}:= X_{0} \times B, \,\, \,\,\beta^{X_{0}}:= \nu_{0} \otimes \beta,  \,\, \,\,T^{X_{0}}:= B^{X_{0}}\rightarrow B^{X_{0}}, (x_{0}, b)\mapsto ( x_{0}b_{1},Tb)
\end{align*}
is measure-preserving and ergodic \cite[Proposition 2.14]{BQRW}. An application of Birkhoff  Theorem to the map $\widetilde{\sigma} : X_{0} \times B\rightarrow \R, (x_{0}, b)\mapsto \sigma(x_{0}, b_{1})$ yields  the announced convergence : for $\beta$-almost every $b\in B$, 
$$\frac{1}{n}\sigma(x_{0}, b_{1}\dots b_{n})= \frac{1}{n}\sum_{k=0}^{n-1}\widetilde{\sigma}\circ (T^{X_{0}})^k(x_{0}, b)\underset{n \to +\infty}{\longrightarrow} \beta^{X_{0}}(\widetilde{\sigma})=\nu_{0}\otimes\mu(\sigma)$$

\bigskip

Let us now deal with the  \underline{first case}. To lighten the notations, we will sometimes denote by $x$ a point of $X_{0}$ (and not $X$).  

\smallskip
The following lemma allows   additional assumptions on our drift cocycle $\sigma$.

\begin{lemme}\label{icontinu}
To prove \Cref{TH-deriv}, one can assume that $\sigma: X_{0}\times G \rightarrow \R^d$ is bounded on $X_{0}\times \supp \mu$ and continuous.
\end{lemme}

\smallskip

\begin{proof}
We noticed (\ref{fund}) that the drift of a $\mu$-trajectory does not depend on the choice of the pair $(D,i)$ made to quantify it. Hence, we just need to show that for a suitable pair $(D,i)$, the cocycle $\sigma$  is bounded on $X_{0}\times \supp \mu$ and continuous.

We choose $D$ such that its  boundary $\partial D$ is compact. This is possible because \Cref{TH-deriv} assumes that no cusp of $M_{0}$ is unfolded in $M$. 

We now construct $i$. Define a first draft by setting $j: M\rightarrow \Z^d$ the map  such that for all  $k\in \Z^d$,  $j_{| \overset{\circ}{D}+k}\equiv k$ and $j_{|M-\bigcup_{k\in \Z^d} \overset{\circ}{D}+k}\equiv0$. The map $j$ is $\Z^d$-equivariant on $\bigcup_{k\in \Z^d} \overset{\circ}{D}+k$, and is not continuous.  Let $m_{G} \in \mathcal{M}(G)$  be a left Haar measure on $G$ and $U\subseteq G$ a connected relatively compact open neighborhood of the neutral element of $G$ such that $U$ is left-K-invariant and $m_{G}(U)=1$. Seeing   the map $j$ as right-$K$-invariant map from $X$ to $\Z^d$, an noticing it is locally bounded, one can set 
$$i : X \rightarrow \R^d,\,\, x\mapsto \int_{U}j(xg)\,dm_{G}(g)$$
This map $i$ is  right-$K$-invariant, bounded on $D$, and  $\Z^d$-equivariant. The second assertion is true because if a translate $D+k$ has non-negligible intersection with $D.U$, then using the connectedness of $U$, one infers $D+k\cap (\partial D). U\neq 0$ implying that $k$ is bounded as $\partial D$ is compact. To check the $\Z^d$-equivariance of $i$, reason as follows. Let $x\in X$, $k\in \Z^d$. By definition, the map  $j$ is $\Z^d$-equivariant on the union $\bigcup_{k\in \Z^d} \overset{\circ}{D}+k$, which has full measure in $X$. Hence  for $m_{G}$-almost every $g\in G$, $j((x+k)g)=j(xg)+k$. Integrating on $U$, we obtain the equivariance of $i$. 

\bigskip
Let us see that the cocycle $\sigma$ associated to $(D,i)$ is continuous. It is enough to prove that $i$ is continuous.  Consider  points $x,y \in X$, and let $\widetilde{x}, \widetilde{y} \in G$ be  some lifts in $G$.  
\begin{align*}
||i(x)-i(y)||&= ||\int_{U}j(xg)-j(yg)\,dm_{G}(g)||\\
&=||\int_{U}j(xg)\,dm_{G}(g) -\int_{ \widetilde{x}^{-1}\widetilde{y} U}j(xg)\,dm_{G}(g)||\\
&\leq m_{G}(U \Delta \widetilde{x}^{-1}\widetilde{y} U)\sup_{xU\cup yU}||j||
\end{align*} 

If  $(y_{n})_{n\in \N}\in X^\N$ is a sequence that converges to $x$, then one can choose lifts  $(\widetilde{y}_{n})_{n\in \N}\in G^\N$ converging to $\widetilde{x}$ and in this case $m_{G}(U \Delta \widetilde{x}_{n}^{-1}\widetilde{y}_{n}U)\rightarrow 0$. Moreover, $\sup_{n\geq 0}\sup_{x_{n}U\cup y_{n}U}||j||$ is finite as $\bigcup_{n}x_{n}U\cup y_{n}U$ is relatively compact in $X$ and $j$ is locally bounded. Hence, the map $i$ is continuous.

Let us show  that  the cocycle $\sigma$ associated to $(D,i)$ is bounded on $\supp \mu \times X_{0}$. It is enough to sow that the index of position $i$ is bounded on the set $D. \supp \mu:=\{xg, x\in D, g\in \supp \mu\}$. To see this, notice that the compactness of  the support of $\mu$ and of the boundary of $D$ implies that the set $D.\supp \mu$ is covered by a finite number of translates $(D+k)_{k\in \Z^d}$. This leads to the result as $i$ is bounded on each of these translates.

\end{proof}

In view of \Cref{icontinu}, \emph{we assume from now on that the drift cocycle $\sigma$ is   bounded on $ X_{0}\times \supp \mu$ and continuous}. The following lemma reduces the study of the drift  to the study of  the $\mu$-averages of $\sigma$ on  $X_{0}$.

\begin{lemme}\label{moycoc}
For all $x \in X_{0}$, $\beta$-almost every $b\in B$, one has the convergence :

$$\frac{1}{n}\sigma(x, b_{1}\dots b_{n})-   \frac{1}{n}\sum_{k=0}^{n-1}\int_{G}\sigma(xb_{1}\dots b_{k},g) d\mu(g)\underset{n\to +\infty}{\longrightarrow}0$$ 

\end{lemme}

\bigskip

\begin{proof}
It is a corollary of a strong version of the law of large numbers. A proof is given in \cite[Proposition 3.2]{BQRW}. 
\end{proof}
\bigskip

We now fix a point  $x\in X_{0}$  whose $\Gamma_{\mu}$-orbit is infinite and we show that for $\beta$-almost every $b\in B$,
$$\frac{1}{n}\sigma(x, b_{1}\dots b_{n})\underset{n\to +\infty}{\longrightarrow}0$$ 

 \smallskip
  Setting $\varphi : X_{0}\rightarrow \R^d, x\mapsto \int_{G}\sigma(x,g)d\mu(g)$,   \Cref{moycoc} has reduced the problem to showing that for $\beta$-almost every  $b\in B$
 
 $$\frac{1}{n}\sum_{k=0}^{n-1}\varphi( xb_{1}\dots b_{k})\underset{n\to +\infty}{\longrightarrow}0$$

This sum can be controlled using the following result. It is due to Benoist-Quint and states that any infinite $\mu$-trajectory on $X_{0}$ must equidistribute toward  the Haar probability measure $\lambda_{0}$.

\begin{lemme}\label{convBQ}
For all $x\in X_{0}$ with infinite $\Gamma_{\mu}$-orbit, $\beta$-almost every  $b\in B$, one has the  weak-$\star$ convergence:
$$\frac{1}{n}\sum_{k=0}^{n-1}\delta_{xb_{1}\dots b_{k}}\underset{n\to +\infty}{\longrightarrow} \lambda_{0}$$
\end{lemme}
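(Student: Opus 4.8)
The plan is to establish the equidistribution of infinite $\mu$-trajectories by invoking the classification of $\mu$-stationary measures due to Benoist--Quint, which is precisely the tool advertised in the introduction. The key structural fact is that on the finite-volume homogeneous space $X_{0}=G/\Lambda_{0}$, with $G$ simple and $\Gamma_{\mu}$ Zariski-dense, every $\mu$-stationary ergodic probability measure is either finitely supported (on a finite $\Gamma_{\mu}$-orbit) or equal to the Haar probability measure $\lambda_{0}$; this is the homogeneous measure rigidity of \cite{BQII}. First I would pass to the empirical measures along the trajectory, setting $\nu_{n}^{b}:=\frac{1}{n}\sum_{k=0}^{n-1}\delta_{b_{k}\dots b_{1}x}$, and study their weak-$\star$ accumulation points. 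Since $X_{0}$ has finite volume but is typically noncompact, the space of probability measures is not weak-$\star$ compact, so the genuine first obstacle is to rule out escape of mass to infinity (into the cusps): one must show the family $(\nu_{n}^{b})_{n}$ is $\beta$-almost surely tight.

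The tightness is exactly where the nontriviality of \cite{BQII} enters. The relevant companion result (the "recurrence" or non-escape-of-mass statement proved alongside the equidistribution theorem in Benoist--Quint's work on finite-volume spaces) guarantees that for $x$ with infinite orbit the trajectory spends an asymptotically negligible fraction of time arbitrarily deep in the cusp, so that $\beta$-almost surely every weak-$\star$ limit of $(\nu_{n}^{b})_{n}$ is an honest probability measure on $X_{0}$ (no mass lost). I would invoke this as a black box, citing the appropriate proposition. Granting tightness, the next step is standard martingale/stationarity bookkeeping: one shows that $\beta$-almost surely any weak-$\star$ limit $\nu_{\infty}$ of the $\nu_{n}^{b}$ is $\mu$-stationary. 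This follows because, along the trajectory, $\frac{1}{n}\sum_{k}(\mu\star\delta_{b_{k}\dots b_{1}x}-\delta_{b_{k+1}\dots b_{1}x})$ is a Cesàro average of martingale increments, which vanishes almost surely by the law of large numbers for bounded-increment martingales; passing to the limit shows $\mu\star\nu_{\infty}=\nu_{\infty}$.

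With stationarity and tightness in hand, I would then decompose $\nu_{\infty}$ into ergodic components and apply the Benoist--Quint classification to each: the components are either supported on finite $\Gamma_{\mu}$-orbits or equal to $\lambda_{0}$. The final step is to exclude the atomic part, and here the hypothesis that $\Gamma_{\mu}.x$ is \emph{infinite} is used: a positive-mass finite-orbit component would force the trajectory to return with positive frequency to a finite set, which contradicts infiniteness of the orbit together with the transience of the walk off any finite invariant set. More precisely, one argues that the support of $\nu_{\infty}$ is contained in the closure of $\Gamma_{\mu}.x$, and a finitely-supported stationary component in that closure would produce a finite $\Gamma_{\mu}$-invariant subset accumulated by the orbit, which is incompatible with the orbit being infinite (one can use that finite stationary sets are isolated in the relevant sense, or a direct recurrence argument). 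Hence every ergodic component is $\lambda_{0}$, so $\nu_{\infty}=\lambda_{0}$; as this holds for every accumulation point, the whole sequence converges weak-$\star$ to $\lambda_{0}$. The main obstacle is unquestionably the non-escape-of-mass estimate, since on a noncompact finite-volume space it is the only thing preventing a spurious concentration in the cusp, and it is the deep input that genuinely requires the finite-volume equidistribution machinery rather than soft functional-analytic arguments.
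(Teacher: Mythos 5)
Your scheme (tightness, then martingale/Breiman argument to make every weak-$\star$ limit point $\mu$-stationary, then ergodic decomposition plus the Benoist--Quint classification) is the natural first attempt, but it has a genuine gap, and it sits exactly at the step you treat most casually: excluding the ergodic components carried by \emph{finite} $\Gamma_{\mu}$-orbits. Your argument --- that a finitely supported component inside $\overline{\Gamma_{\mu}.x}$ would give a finite $\Gamma_{\mu}$-invariant set accumulated by the orbit, ``incompatible with the orbit being infinite'' --- is false. For $G$ simple and $\Gamma_{\mu}$ Zariski-dense, an infinite orbit in $X_{0}$ is automatically \emph{dense} (this is the orbit closure theorem, Theorem 1.1 of \cite{BQIII}), so it accumulates on every point of $X_{0}$, in particular on any finite orbit; and finite orbits can perfectly well coexist with dense ones (take $\Gamma_{\mu}$ generated by finitely many elements of $\Lambda_{0}$: it fixes the base point $e\Lambda_{0}$ while generic orbits are infinite). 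Hence the inclusion $\supp \nu_{\infty}\subseteq\overline{\Gamma_{\mu}.x}$ rules out nothing, and no soft ``isolation'' or recurrence argument will do the job. What actually has to be proved is that the walk started at $x$ spends an asymptotically negligible fraction of its time in small neighbourhoods of each finite invariant set --- a quantitative non-concentration estimate, obtained via a Margulis-function/drift argument near the finite orbit. This is precisely the hard content of the pointwise equidistribution theorem (Theorem 1.3 of \cite{BQIII}); it does \emph{not} follow from the classification of stationary measures in \cite{BQII} by ergodic decomposition, just as Ratner's measure classification does not by itself yield equidistribution of individual unipotent orbits.

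For comparison, the paper does not attempt to reprove any of this: it directly combines Theorem 1.1 of \cite{BQIII} (the orbit closure $\overline{\Gamma_{\mu}.x}=H.x$ is homogeneous and carries a unique $H$-invariant probability measure $\nu_{x}$) with Theorem 1.3 of \cite{BQIII} (for every $x$, $\beta$-almost surely the empirical measures converge weak-$\star$ to $\nu_{x}$), and then notes that when the orbit is infinite $\nu_{x}$ is atom-free, hence $G$-invariant by corollary 1.2 of \cite{BQI}, hence equal to $\lambda_{0}$. Your tightness concern is legitimate but is already subsumed in the cited equidistribution theorem; the idea genuinely missing from your proposal is the non-concentration near finite orbits, which cannot be dismissed in one sentence.
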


\begin{proof}
This result is obtained by combining Theorems $1.1$ and $1.3$ of \cite{BQIII} as well as corollary 1.2 of \cite{BQI}. The first theorem states that for all $x\in X_{0}$, the $\Gamma_{\mu}$-orbit  of $x$ has homogeneous closure, i.e. is of the form $\widebar{x\Gamma_{\mu}}=xH$ where $H$ is a closed subgroup of $G$, and also carries a (unique)  $H$-invariant probability measure $\nu_{x}$.  The second theorem adds that for all $x\in X_{0}$, $\beta$-almost every  $b\in B$, the sequence of probability measures $(\frac{1}{n}\sum_{k=0}^{n-1}\delta_{xb_{1}\dots b_{k}})_{n\geq 1}$  weak-$\star$ converges  to $\nu_{x}$. If the $\Gamma_{\mu}$-orbit of $x$ is infinite, then $\nu_{x}$ cannot have any atom, and corollary 1.2 of \cite{BQI} implies that $\nu_{x}$ has to be $G$-invariant, hence equal to $\lambda_{0}$. 
\end{proof}
\bigskip

Our additional assumptions on $\sigma$  imply that $\varphi$ is continuous and bounded. We infer from \Cref{convBQ} that  
$$\frac{1}{n}\sum_{k=0}^{n-1}\varphi( xb_{1}\dots b_{k})\underset{n\to +\infty}{\longrightarrow} \lambda_{0}(\varphi)$$ There remains to show the nullity of the drift from the point of view of  $\lambda_{0}$.

\begin{lemme}\label{derivleb}
for all $g \in G$, $$\int_{X_{0}}\sigma(x,g)\,d\lambda_{0}(x)=0$$
In particular, $\lambda_{0}(\varphi)=0$. 
\end{lemme}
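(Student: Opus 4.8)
I define the candidate average as a map on the group,
\[
\Phi : G \rightarrow \R^d, \qquad g \mapsto \int_{X_{0}}\sigma(g,x)\,d\lambda_{0}(x).
\]
This is well-defined: for each fixed $g$ the section $\sigma(g,\cdot)$ is bounded on $X_{0}$ (the set $\{g\}.D$ meets only finitely many translates $(D+k)_{k\in\Z^d}$, since $D$ has compact boundary and $\{g\}$ is compact), and $\lambda_{0}$ is a probability measure. The whole point is that $\Phi$ is \emph{not} computed directly; instead I claim it is a continuous homomorphism from $G$ into the abelian group $(\R^d,+)$, and then I invoke the simplicity of $G$ to force it to vanish. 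Note that this is exactly where the hypothesis that $G$ is simple (rather than, say, abelian, where a nonzero drift of this kind does occur) enters in an essential way.

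The homomorphism property comes from combining the cocycle relation $\sigma(gh,x)=\sigma(g,hx)+\sigma(h,x)$ with the $G$-invariance of the Haar probability measure $\lambda_{0}$ on $X_{0}=G/\Lambda_{0}$. Concretely,
\begin{align*}
\Phi(gh) &= \int_{X_{0}}\sigma(gh,x)\,d\lambda_{0}(x) = \int_{X_{0}}\big(\sigma(g,hx)+\sigma(h,x)\big)\,d\lambda_{0}(x)\\
&= \int_{X_{0}}\sigma(g,hx)\,d\lambda_{0}(x) + \Phi(h) = \Phi(g)+\Phi(h),
\end{align*}
the last equality using the change of variable $x\mapsto hx$, which preserves $\lambda_{0}$. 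For continuity, I would use that $\sigma$ is continuous (we may assume $i$ continuous by \Cref{icontinu}) together with a local domination: for $g$ ranging in a fixed compact neighborhood $K$, the values $\sigma(g,x)$ stay uniformly bounded because $K.D$ meets finitely many translates of $D$; dominated convergence then gives continuity of $\Phi$.

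Finally, a connected simple real Lie group $G$ admits no nontrivial continuous homomorphism into an abelian Lie group: its Lie algebra satisfies $[\mathfrak{g},\mathfrak{g}]=\mathfrak{g}$, so the differential $d\Phi:\mathfrak{g}\to\R^d$ kills $[\mathfrak{g},\mathfrak{g}]=\mathfrak{g}$ and hence vanishes, and since $G$ is connected $\Phi\equiv 0$. This proves $\int_{X_{0}}\sigma(g,x)\,d\lambda_{0}(x)=0$ for every $g\in G$. The ``in particular'' statement then follows by Fubini (justified by boundedness of $\sigma$ on $\supp\mu\times X_{0}$):
\[
\lambda_{0}(\varphi)=\int_{X_{0}}\!\!\int_{G}\sigma(g,x)\,d\mu(g)\,d\lambda_{0}(x)=\int_{G}\Phi(g)\,d\mu(g)=0.
\]
There is no real analytic obstacle here; the entire content is the structural observation that the $\lambda_{0}$-average is a homomorphism, so that simplicity of $G$ does all the work. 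The only points requiring care are the integrability/continuity justifications and the correct use of the $G$-invariance of $\lambda_{0}$.
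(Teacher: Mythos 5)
Your proof is correct and is essentially the paper's own argument: define the $\lambda_{0}$-average $\phi(g)=\int_{X_{0}}\sigma(g,x)\,d\lambda_{0}(x)$, show it is a homomorphism $G\to\R^d$ via the cocycle relation and the $G$-invariance of $\lambda_{0}$, and let simplicity of $G$ kill it. The only difference is the final step: the paper uses the purely algebraic fact that $G$ coincides with its commutator subgroup $[G,G]$, so that \emph{any} homomorphism into an abelian group vanishes, which makes your continuity verification (local domination, dominated convergence, and the implicit appeal to automatic smoothness of continuous homomorphisms) unnecessary.
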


\begin{proof}
Set $\phi : G \rightarrow \R^d, g \rightarrow \int_{X_{0}}\sigma(x,g)d\lambda_{0}(x)$. Let  $g,h\in G$. The cocycle relation and the $G$-invariance of $\lambda_{0}$ yield :
\begin{align*}
\phi(gh)&= \int_{X_{0}}\sigma(xh,g)+ \sigma(x,h)\,d\lambda_{0}(x)\\
&=\int_{X_{0}}\sigma(x,g)+ \sigma(x,h)\,d\lambda_{0}(x)\\
&=\phi(g)+\phi(h)
\end{align*}
Hence the map $\phi$ is a morphism of groups. As $G$ coincides with its commutator $[G,G]$, we must have $\phi(G)=0$. 
\end{proof}

 This concludes the proof of \Cref{TH-deriv}.
\bigskip

\section{Drift at almost every point} \label{sec3}

Let $M=\Lambda \backslash G/K$ be a $\Z^d$-cover of a finite volume - real or complex  - hyperbolic manifold $M_{0}=\Lambda_{0} \backslash G/K$ as in \Cref{sec1.1}, and $X=\Lambda\backslash G$, $X_{0}=\Lambda_{0}\backslash G$ their associated frame bundles. Our goal in  \Cref{sec3} is to describe the drift of a random walk on $X$ for almost every starting point. The context is more general than in \Cref{sec2}  as \emph{we authorize the cusps of $M_{0}$ to be unfolded in $M$}. This makes the drift cocycle harder to control : it might be unbounded, sometimes not even integrable. We will show that the case of non-integrability occurs only if $G= SO_{e}(1,2)$ and leads to radically different behaviors for the drift. More precisely, we will see in this case that if $(x_{n})$ denotes a random trajectory on $X$ and $i(x_{n})\in \R^d$ its successive positions, then the sequence $\frac{1}{n}i(x_{n})$ does not converge in $\R^d$ but accumulates over a subspace of $\R^d$ spanned by the directions of translations above the unfolded cusps.  

\bigskip

As above, we  use the following  \underline{notations}. The walk on $X$ is induced by a probability measure $\mu$  on $G$ whose support is compact and generates a Zariski-dense semigroup $\Gamma_{\mu}$ in $G$. We fix a good fundamental domain $D\subseteq M$  for the $\Z^d$-action on $M$,  a measurable map $i: M\rightarrow \R^d$  that is $\Z^d$-equivariant  and  bounded on $D$. We also see $i$ as a $\Z^d$-equivariant $K$-invariant  map on $X$ and write $\sigma : X_{0}\times G \rightarrow \R^d, (x+\Z^d, g)\mapsto i(xg)-i(x)$ the associated drift cocycle. We also set $B=G^{\N^\star}, \beta=\mu^{\otimes \N^\star}$.

\subsection{ Case 1 :  $G\neq SO_{e}(1,2)$}
\label{Case1}

In this section, we put aside the case of walks on hyperbolic surfaces and  show that in any other context the drift of a random walk on $X$ is almost everywhere null.  

\begin{th.}\label{drift-non path}
 If $G\neq SO_{e}(1,2)$, then for almost every $x_{0}\in X_{0}$, $\beta$-almost every $b\in B$, 
$$\frac{1}{n}\sigma(x_{0}, b_{1}\dots b_{n})\underset{n\to +\infty}{\longrightarrow}0 $$

\end{th.}

The main point is that the drift cocycle $\sigma : X_{0}\times G\rightarrow \R^d$ is integrable for the measure $\lambda_{0}\otimes \mu$, where $\lambda_{0}$ denotes the Haar probability measure on $X_{0}$. This result is already known in the case where $G=SO_{e}(1,m)$ \cite[remark 1]{Fra} but we have not found a reference for the case $G=SU(1,m)$ so we will give a proof.  Once the integrability is established, we can use  Birkhoff Ergodic Theorem and \Cref{derivleb} to conclude.  In order to show integrability, notice that the drift cocycle $\sigma$ is bounded on compact subsets of $X_{0}\times G$. Hence it all comes down to a good control of $\sigma$ for a starting point in  the cusps of $X_{0}$.

In the rest of \Cref{Case1}, we consider one of the cusps $\mathscr{C}_{j}$ of $M_{0}$, and  call it $\mathscr{C}$. We will rather see it as the corresponding  right-$K$-invariant subset of $X_{0}$. 
As in Section \ref{fund}, one can identify (up to negligible subsets) the cusp $\mathscr{C}$ with the product $F^0\times \R_{>0}\times K$.  We first use these coordinates to give an explicit  description of the Haar measure restricted to $\mathscr{C}$. It will be convenient to  assume that the element $v_{0}\in \mathfrak{a}$ parametrizing the Cartan subspace $\mathfrak{a}$ has norm $1$, i.e. $B(v_{0},v_{0})=1$, and that the translation parameter of a lift of $\mathscr{C}$ in $G$ is null, i.e. $g_{j}=e$ (see \ref{sec1.2} for notations).

\begin{lemme} \label{Haar}
Denote by $\pi : G \rightarrow X_{0}$ the canonical projection, and set $$\pi_{\mathscr{C}} : F^0\times \R_{>0}\times K \rightarrow \mathscr{C}, (n, t, k)\mapsto \pi(na_{t}k)$$
the coordinate map of $\mathscr{C}$. Then 
$$\pi_{\mathscr{C} \star}(dn \otimes e^{-\rho t}dt \otimes dk)=\lambda_{0|\mathscr{C}} $$
where $dn$, $dk$ are Haar measures on $N$, $K$, and $\rho=m-1$ if $G=SO_{e}(1,m)$ or $\rho=2m$ if $G=SU(1,m)$. 
\end{lemme}

\begin{proof}
We just need to express the Haar measure $\lambda$ of $G$ in terms of the Iwasawa decomposition $N\times \R \times K$. As $\lambda$ is left $N$-invariant and right $K$-invariant, $\lambda$ is of the form 
$$\lambda = dn \otimes \nu \otimes dk $$
where $\nu$ is some measure on $\R$. Denoting by $c_{g}$ the conjugation by $g$, one has 
$a_{t \star} (dn \otimes \nu) = c_{a_{t} \star} dn \otimes a_{t \star}\nu$. The image measure $c_{a_{t} \star}dn$ is still $N$-invariant so it must be a multiple of $dn$, let's say  $c_{a_{t} \star}dn=e^{\rho(t)}dn$. Necessarily, the coefficient $e^{\rho(t)}$ is the inverse of the determinant of $\text{Ad}a_{t}$ acting on $\mathfrak{n}$. If $G=SO_{e}(1,m)$, one gets $\rho(t)=-(m-1) t$ and if $G=SU(1,m)$, then $\rho(t)=-2m t$ (remember we normalized $v_{0}$).  As $dn \otimes \nu$ is left $a_{t}$-invariant, we conclude that $a_{t \star}\nu= e^{(m-1)t}\nu$ in the first case, and $a_{t \star}\nu= e^{2mt}\nu$ in the second. This leads to the announced result. 

\end{proof}

The next lemma controls the behavior of $\sigma$ along the cusps.

\begin{lemme} \label{estimation-sigma}
There exists constant $C>0$ such that  for all $n\in F_{0}$, $t>0$, $k\in K$, $g\in \supp \mu$,  one has $$\sigma(\pi(na_{t}k), g)\leq C e^{rt}$$
where $r=1$ if $G=SO_{e}(1,m)$ or $r=2$ if $G=SU(1,m)$ 
\end{lemme}

\begin{proof}
Endow $N\equiv NK/K$ with the riemannian metric induced by $G/K$ and denote by $d_{N}$ the corresponding left $N$-invariant distance.  For $R>0$, write 
$$B_{N}(F_{0}, R):=\{n\in N, d_{N}(n, F_{0}) <R\}$$
the set elements in $N$ whose distance to $F_{0}$ is strictly less than $R$. We are going to show there exists a constant $C_{1}>0$ such that for all $R\geq 1$, 
\begin{align*}
\sup_{x,y\in B_{N}(F_{0}, R)}|i(x)-i(y)| \leq C_{1} R \tag{$\ast$}
\end{align*}
where $i : M \rightarrow \R^d$ is identified here with its lift to $G$. Let us see first how we conclude the proof from here. By ($\ast$) and the property 3) of good fundamental domains (\ref{fund}), it is enough to show  there exists a constant $C_{2}>0$ such that for every $n\in F_{0}, t>0, k\in K, g\in \supp \mu$, one has $na_{t}kg = n_{1} a_{t_{1}}k_{1}$ with $n_{1}\in B_{N}(F_{0}, C_{2}e^{rt})$. To this end, write $kg= n_{2} a_{t_{2}}k_{2}$ the Iwasawa decomposition of $kg$.  Then 
$$n_{1}=n a_{t}n_{2} a_{-t}= n \exp(\text{Ad}(a_{t})(Y))$$
 where $Y\in \mathfrak{n}$ is the logarithm of $n_{2}$. The norm $||Y||$ is uniformly bounded by a constant $C_{2}$ when $kg$ varies in the compact set $K\times \supp \mu$. As $\text{Ad}(a_{t})$ acts symmetrically on $\mathfrak{n}$ with eigenvalues $e^t$ if $G=SO_{e}(1,n)$ and $e^{t}, e^{2t}$ if $G=SU(1,n)$, we infer that $||\text{Ad}(a_{t}) Y||\leq C_{2}e^{rt}$ and finally get $n_{1}\in B_{N}(F_{0}, C_{2}e^{rt})$.

\bigskip

We now prove the inequality ($\ast$). The neighborhood $B_{N}(F_{0}, 1)$ of $F_{0}$ is relatively compact, hence it is covered by a finite number of $N^\Lambda$-translates of $F_{0}$ :  $B_{N}(F_{0},1)\subseteq \bigcup_{i=1}^{k}g_{i}F_{0}$ with $g_{i}\in N^\Lambda$. As $d_{N}$ is left $N$-invariant, one has for any $g_{0}\in N^\Lambda$, the inclusion $B_{N}(F_{n_{0}},1)\subseteq \bigcup_{i=1}^{k}g_{0}g_{i}F_{0}$.

Set 
$$C_{1}\geq 2\sup_{x,y\in \bigcup_{i=1}^{k}g_{i}F_{0}}|i(x)-i(y)|$$

Notice that for $g\in N^\Lambda$, $x\in N$, one has $i(gx)=i(g)+i(x)$.  Hence, for any $g_{0} \in N^\Lambda$, one has the equivalent inequality 
$$C_{1} \geq 2\sup_{x,y\in \bigcup_{i=1}^k g_{0}g_{i}F_{0}}|i(x)-i(y)|$$
which implies that the variation of $i$ on any $d_{N}$-ball of radius $1$ is bounded by  $\frac{1}{2}C_{1}$. 

Now we can infer ($\ast$) as follows. Assume first that  $R\geq 1$ is an integer. Then if $x, y \in B(F_{0}, R)$,  one can find a sequence $x_{0}, \dots, x_{2R} \in N$ such that $x_{0}=x, x_{2R}=y$ and $d_{N}(x_{j},x_{j+1})<1$. We just showed that $|i(x_{j}) -i(x_{j+1})|\leq \frac{1}{2}C_{1}$. Hence $|i(x)-i(y)|\leq C_{1}R$. Replacing $C_{1}$ by $2C_{1}$, one get ($\ast$) for any $R\in [1,+\infty[$.
\end{proof}

\bigskip

The previous lemmas yield the announced  integrability of $\sigma$.
\begin{prop}\label{sigma-int}
 If $G\neq SO_{e}(1,2)$, then the drift cocycle $\sigma : X_{0}\times G \rightarrow \R^d$ is $\lambda_{0}\otimes \mu$-integrable
\end{prop}

\begin{proof}
The cocycle $\sigma$ is bounded on compact subsets of $X_{0}\times G$. Hence, we just need to show it is integrable on $\mathscr{C}\times G$ for every cusp $\mathscr{C}$ of $X_{0}$.  Using Lemmas \ref{Haar} and  \ref{estimation-sigma}, we can write :
\begin{align*}
\int_{\mathscr{C}\times G}|\sigma(x,g)|\,d\lambda_{0}(x) d\mu(g)&=\int_{F_{0}\times \R_{>0}\times K\times G}|\sigma\left( \pi(na_{t}k),g  \right)| \,dn  \,e^{-\rho t}dt \, dk\,d\mu(g)\\
&\leq \int_{F_{0}\times \R_{>0}\times K\times G} Ce^{(r-\rho)t} \,dn \, dt  \,dk \,d\mu(g)\\
&<\infty
\end{align*}
where the finiteness of the integral comes from the inequality $r<\rho $ that is true when $G\neq SO_{e}(1,2)$. 
\end{proof}

\bigskip

We can now conclude that the drift is almost everywhere null if $G\neq SO_{e}(1,2)$.

\begin{proof}[Proof of \Cref{drift-non path}]
The Haar probability measure $\lambda_{0}$ on $X_{0}$  is stationary and ergodic for the $\mu$-random on $X_{0}$. Hence the dynamical system $(B^{X_{0}}, \beta^{X_{0}}, T^{X_{0}})$ defined by 
\begin{align*}
B^{X_{0}}:= X_{0} \times B, \,\, \,\,\beta^{X_{0}}:= \lambda_{0} \otimes \beta,  \,\, \,\,T^{X_{0}}:= B^{X_{0}}\rightarrow B^{X_{0}}, (x_{0}, b)\mapsto ( x_{0}b_{1},Tb)
\end{align*}
is measure-preserving and ergodic \cite[Proposition 2.14]{BQRW}. Moreover, according to \Cref{sigma-int}, the map $\widetilde{\sigma} : X_{0} \times B\rightarrow \R, (x_{0}, b)\mapsto \sigma(x_{0}, b_{1})$ is $\beta^X_{0}$-integrable. Hence  Birkhoff  Theorem implies  that for $\lambda_{0}\otimes \beta$-almost every $(x_{0},b)\in X_{0}\times B$,  

$$\frac{1}{n}\sigma(x_{0}, b_{1}\dots b_{n})\underset{n\to +\infty}{\longrightarrow} \int_{X_{0}\times G}\sigma(x,g) d\lambda_{0}(x)d\mu(g)$$ 
By \Cref{derivleb}, this integral must be zero.

\end{proof}

\subsection{ Case 2 :  $G= SO_{e}(1,2)$}
\label{Case2}
The previous section has reduced our study to the case where $G=SO_{e}(1,2)$. In this context, the space $M=\Lambda\backslash G/K$ is a $\Z^d$-cover of the  finite volume  hyperbolic surface $M_{0}=\Lambda_{0}\backslash G/K$ and $X, X_{0}$ correspond to their unit tangent bundles. We will rather use the notations $S=M$, $S_{0}=M_{0}$, $T^1S=X$, $T^1S_{0}=X_{0}$. 

\bigskip
The crucial difference with  \Cref{Case1} is that the drift cocycle $\sigma : T^1S_{0}\times G\rightarrow \R^d$ is not $\lambda_{0}\otimes \mu$-integrable whenever a cusp of $S_{0}$ is unfolded in $S$. Let us give a brief justification.  Denote by $\mathscr{C}$ such a cusp. We just need to check that for $g\in G-K$,  the drift map $\sigma(.,g)$ is not integrable along $\mathscr{C}$.  Let $\mathbb{H}$ be the hyperbolic half-space. The preimage of $\mathscr{C}$ in $S$ can be identified with some upper level set $\mathbb{H}_{t}:=\{z\in \mathbb{H},\, \text{Im} z> e^t\}$ where $t\in \R$.  For  most vectors $x=(z,v)\in T^1\mathbb{H}_{t}$, the translate $xg$ has its abscissa of the same  order as $\text{Im}z$, which leads to $||\sigma(x,g)||\simeq \text{Im}z$ as we consider good fundamental domains (\ref{fund}).  But the riemannian metric on $\mathbb{H}_{t}$ is precisely $\frac{1}{(\text{Im}z)^2}\langle.,. \rangle_{eucl}$, so $$\int_{T^1\mathbb{H}_{t}} ||\sigma(x,g)|| dx \simeq \int_{]e^t,+\infty[} y\frac{1}{y^2} dy=+\infty$$

\bigskip
The non-integrability of $\sigma$  hints at  radically different conclusions : the drift of a typical $\mu$-trajectory is no longer null, but accumulates over a whole  sub-vector space $E_{C}$ in $\R^d$, generated by the directions of translation above the unfolded cusps in $S_{0}$. More precisely, denote by $\mathscr{C}_{1}, \dots, \mathscr{C}_{q}$ the cusps of $S_{0}$,  write $v_{1}, \dots,v_{q}\in \Z^d$ the translations obtained by lifting to $S$  simple closed curves homotopic to the cusps, and set $E_{C}=\text{Vect}_{\R}\{v_{1},\dots v_{q}\}$. We prove the following. 

\smallskip
\begin{th.}[Drift]\label{drift-hypsurf}
The drift of the $\mu$-walk on $T^1S$ is almost everywhere equal to  $E_{C}$ :  for almost every $x_{0}\in T^1S_{0}$, for $\beta$-almost every $b\in B$,
$$\left\{\text{accumulation points of the sequence $\left(\frac{1}{n}\sigma(x_{0}, b_{1}\dots b_{n})\right) _{n\geq 1}$}\right\} = E_{C} $$
\end{th.}

\bigskip
Our proof of \Cref{drift-hypsurf} will also yield a characterization of $\Z^d$-covers of finite volume hyperbolic surfaces for which the $\mu$-walk is recurrent.

\begin{th.}[Recurrence]\label{rec-hypsurf}

The $\mu$-walk on $T^1S$ is recurrent and ergodic if and only if $d=1$, or $d=2$ and $\dim E_{C}=0$; and it is transient otherwise.
\end{th.}

\smallskip

In this statement, everything is to be understood from the point of view of a Haar measure $\lambda$ on $T^1S$. The \emph{recurrence} (resp. \emph{transience}) means that for $\lambda$-almost every starting point $x\in T^1S$, $\beta$-almost every $b\in B$, the trajectory $(xb_{1}\dots b_{n})_{n\geq 0}$ subconverges to $x$ (resp. leaves every compact set of $T^1S$). If the walk is recurrent, we specify it is \emph{ergodic} if for any measurable subset $E\subseteq T^1S$ with  $\lambda(E)>0$ and $\lambda\otimes \beta$-almost every $(x,b)\in T^1S\times B$, the trajectory $(xb_{1}\dots b_{n})_{n\geq 0}$ meets $E$ infinitely many times. It is equivalent to say that the action of $\Gamma_{\mu}$ on $T^1S$ is ergodic for $\lambda$ (see  \cite{Tim-these}, Sections 1.3 and 3.1.1).

\Cref{rec-hypsurf} extends a result of  Conze and Guivarc'h \cite[Proposition 4.5]{ConGui} that deals with  the case where the measure $\mu$ is symmetric (i.e. invariant under $g\mapsto g^{-1}$) and the base $S_{0}$ is compact. We recently generalized \Cref{rec-hypsurf}   by proving that  if $X=\Lambda\backslash G$ is a rank-one homogeneous space (not necessarily an abelian cover with finite volume base) and $\mu$ a  Zariski-dense probability measure on $G$ with finite first moment then the $\mu$-walk and the geodesic flow on $X$ are either both recurrent ergodic, or both transient \cite{Tim-recurrence}. The proof is however much more involved than in the present case, and uses a different technique, approximating  $\mu$-trajectories by their asymptotic geodesic ray.

\bigskip

\subsubsection{A central limit theorem}

We circumvent the non-integrability of the drift cocycle $\sigma$ by estimating the limit law of the variable  $\frac{1}{n}\sigma(x,b_{1}\dots b_{n})$ as $n$ goes to infinity. As above, we set  $B=G^{\N^\star}$, $\beta=\mu^{\N^\star}$ and denote by $\lambda_{0}$  the Haar probability measure on $T^1S_{0}$.

\begin{prop}[CLT]\label{Cauchy} Let $\nu_{n}$ be the image probability measure of $\lambda_{0} \otimes \beta$ under the map $ T^1S_{0} \times B \rightarrow \R^d,\, (x,b)\mapsto  \frac{1}{n}\sigma(x,b_{1}\dots b_{n})$. Then the sequence $(\nu_{n})_{n\geq0}$ weak-$\star$ converges to a centered Cauchy distribution on   $E_{C}$.   
\end{prop}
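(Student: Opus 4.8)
The plan is to read $\sigma(b_n\cdots b_1,x)$ as the signed winding of the trajectory $(b_k\cdots b_1 x)_{0\le k\le n}$ around the two unfolded cusps $\mathscr{C}_1,\mathscr{C}_2$ and to prove for it an \emph{index-$1$ stable} limit theorem, i.e. a convergence to a centered Cauchy law after division by $n$. The quantitative heart is already contained in \cref{nonint} and \cref{nonint2}: they show that for a fixed $g\in G-K$ the increment $\sigma(g,\cdot)$ has a heavy tail of index $1$ under $\lambda_0$, namely $\lambda_0(\{|\sigma(g,\cdot)|>R\})\asymp 1/R$, the mass coming from points lying at height $\gtrsim R$ in a cusp, from which one step of the walk produces a winding comparable to the height. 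This is exactly the tail that forces a Cauchy rather than a Gaussian scaling limit, and it is what explains the normalisation by $n$ instead of $\sqrt n$.

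First I would decompose $\sigma(b_n\cdots b_1,x)=\sum_{k=0}^{n-1}\sigma(b_{k+1},y_k)$ with $y_k:=b_k\cdots b_1 x$, and record that the summands are uniformly $O(1)$ except when $y_k$ sits high inside a cusp. The large increments thus organise into \emph{excursions}: the walk enters a cusp neighbourhood, climbs to some maximal height $Y$, and comes back, contributing a winding of order $\pm Y$. The sign is symmetric under the reflection $\mathrm{Re}\,z\mapsto -\mathrm{Re}\,z$ of the cusp, which is the source of the \emph{centering} of the limit.

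Next I would import the Enriquez--Le Jan theorem \cite{Enr-LeJan} as the model computation: for the geodesic flow on $T^1S_0$ with the measure $\lambda_0$, the winding around the cusps up to time $t$, divided by $t$, converges in law to a centered Cauchy distribution whose scale is read off from the widths of the cusps. The remaining task is to transfer this from the geodesic flow to the $\mu$-walk. Here I would use that a Zariski-dense compactly supported walk on $G=PSL_2(\R)$ has positive drift $\ell>0$ in the Cartan direction, so that after $n$ steps the trajectory has travelled hyperbolic distance $\sim \ell n$ in $S_0$; combined with the equidistribution of infinite trajectories toward $\lambda_0$ (\cref{convBQ}), this should let me match both the intensity of the cusp visits and the index-$1$ tail of the walk's excursions with those of a geodesic of length $\ell n$. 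Concretely, I would show that the point process formed by the times and sizes of the large increments $\sigma(b_{k+1},y_k)$, suitably rescaled, converges to the same Poisson point process governing geodesic windings; summing a Poisson cloud of index-$1$ jumps yields a stable law of index $1$, i.e. a Cauchy variable, and the drift $\ell$ only rescales it. This gives $\nu_n\Rightarrow$ centered Cauchy.

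The main obstacle is precisely this transfer step. Unlike a geodesic, the $\mu$-walk does not climb a cusp along a single deterministic curve, so I must show that its excursions nonetheless reach heights with the same index-$1$ tail and the same intensity (up to the factor $\ell$) as geodesic excursions, controlling both the entrance statistics into the cusps and the essentially parabolic climb once inside. The second delicacy is that, the cocycle being non-integrable (\cref{nonint2}), no Birkhoff or $L^2$ argument is available: the convergence has to be established at the level of the full distribution, via point-process and stable-limit techniques, while absorbing the weak dependence between successive excursions produced by the Markovian nature of the walk.
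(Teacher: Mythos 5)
There is a genuine gap, and it is exactly the step you flag yourself as the main obstacle: the transfer from the geodesic flow to the $\mu$-walk. Your program (decompose into increments $\sigma(b_{k+1},y_k)$, isolate cusp excursions, prove that the point process of large increments converges to the same Poisson process as for geodesic windings, then sum an index-$1$ Poisson cloud) is a plan, not a proof; none of its key steps follow from the ingredients you cite. \Cref{nonint} gives non-integrability of a single increment, but says nothing about the tail or intensity of the \emph{walk's} excursion maxima, and \cref{convBQ} is an equidistribution statement for ergodic averages, which by design gives no control over a non-integrable observable --- a tension you note yourself ("no Birkhoff argument is available") and then nevertheless rely on to "match the intensity of cusp visits". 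Handling the dependence between successive excursions of a Markov chain with heavy-tailed increments is serious machinery (Poisson approximation under mixing), and nothing in the paper's toolbox supplies it.

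The paper's proof shows that this entire program is unnecessary, because the transfer is an \emph{exact identity in law}, not an approximation. Write the Cartan decomposition $b_n\cdots b_1=k_n(b)\,a_{t_n(b)}\,l_n(b)$ with $k_n(b),l_n(b)\in K$. Since the index $i$ depends only on the base point in $S$, it is $K$-invariant, so $\sigma(k,\cdot)\equiv 0$ for $k\in K$, and the cocycle relation gives $\sigma(b_n\cdots b_1,x)=\sigma\bigl(a_{t_n(b)},\,l_n(b)x\bigr)$. Since $\lambda_0$ is $G$-invariant, for each fixed $b$ the law of $\sigma(b_n\cdots b_1,\cdot)$ under $\lambda_0$ coincides with the law of $\sigma(a_{t_n(b)},\cdot)$ under $\lambda_0$; hence $\nu_n(\varphi)=\int_B \nu'_{t_n(b)}\bigl(\varphi(\tfrac{t_n(b)}{n}\,\cdot\,)\bigr)\,d\beta(b)$ in the notation of \cref{Cauchy1}. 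The only probabilistic input about the walk is then the law of large numbers for the Cartan projection, $t_n(b)/n\to\lambda_\mu>0$ $\beta$-a.s.\ (positivity of the first Lyapunov exponent), after which \cref{Cauchy1} (the Enriquez--Le Jan input, which you correctly identified as the model computation) and dominated convergence yield $\nu_n\to m_{c\lambda_\mu}$. So the correct missing idea is not a finer excursion analysis but the observation that $K$-invariance of $i$ plus $G$-invariance of $\lambda_0$ collapses the walk onto the geodesic flow at the random time $t_n(b)$; your intuition that the drift $\ell=\lambda_\mu$ enters only as a rescaling of the Cauchy parameter is confirmed by this argument.
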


Recall that a centered Cauchy distribution on $\R$ is a probability measure of the form  
$$m_{c}=\frac{c}{\pi (c^2+t^2)}dt $$
where $c>0$  is a positive parameter. More generally, given a finite dimensional real vector space $E$, a probability measure $m$ on $E$ is a centered Cauchy distribution if there exists a linear isomorphism $\psi : E\overset{\sim}{\rightarrow} \R^k$ for which the image  $\psi_{\star}m$ of $m$ is of the form $m_{c_{1}}\otimes \dots \otimes m_{c_{k}}$ where the $m_{c_{i}}$ are centered Cauchy distributions on $\R$.


\bigskip
The proof of  \Cref{Cauchy} relies on a result of Enriquez and Le Jan, which estimates how the index of position $i$ varies in law under the action of the geodesic flow. 

\begin{lemme}\label{Cauchy1}
For $t>0$, denote by $\nu'_{t}$ the image probability measure of $\lambda_{0}$ by the  map $T^1S_{0}\rightarrow \R^d, x \mapsto \frac{1}{t}\sigma(x,a_{t})$. Then  $(\nu'_{t})_{t>0}$ weak-$\star$ converges to a  centered Cauchy distribution $m_{C}$ on $E_{C}$  as $t$ goes to $+\infty$. 
\end{lemme}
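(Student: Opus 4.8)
The plan is to identify $\sigma(a_t,\cdot)$ with the signed winding of the geodesic around the two unfolded cusps and then to feed this into the Enriquez--Le Jan analysis, whose conclusion is precisely that such windings, renormalized by the elapsed time, converge in law to a Cauchy distribution. Two ingredients must be produced: a symmetry forcing the limit to be centered, and a heavy-tail estimate forcing it to be $1$-stable.

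I would first dispose of the centering by a time-reversal symmetry. Let $\iota : T^1S_0 \to T^1S_0$ be the flip $(z,v)\mapsto (z,-v)$; it preserves $\lambda_0$, fixes base points, and satisfies the reversal identity $a_t\circ\iota = \iota\circ a_{-t}$ as maps of $T^1S_0$. Since the chosen index $i$ depends only on the base point, $\iota$ leaves $i$ unchanged, so $\sigma(a_t,\iota x) = \sigma(a_{-t},x)$, and the cocycle relation gives $\sigma(a_{-t},x) = -\sigma(a_t,a_{-t}x)$. As both $\iota$ and the flow $a_{-t}$ preserve $\lambda_0$, this yields $\sigma(a_t,\cdot)\overset{d}{=}-\sigma(a_t,\cdot)$ under $\lambda_0$; hence every $\nu'_t$ is symmetric and any weak-$\star$ limit is symmetric about $0$. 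In particular a linear drift is excluded, which is what a \emph{centered} limit requires.

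The substance is the shape of the limit, and here I would reduce to \cite{Enr-LeJan} through the following dictionary. Because $i$ comes from the homomorphism $f:\Lambda_0\to\Z$ counting algebraic intersections with the arc $\gamma$ joining $\mathscr{C}_1$ and $\mathscr{C}_2$, the quantity $\sigma(a_t,x)$ is, up to sign conventions, the net number of crossings of $\gamma$ by the projected geodesic of length $\asymp t$ issued from $x$. Using the horoball description of Section 2.1 I would show that these crossings are produced, up to an $o(t)$ error, by the excursions into the neighborhoods $U_1,U_2$ of the two unfolded cusps, the cusp $\mathscr{C}_0$ contributing nothing since $f(g_0)=0$, and the travel in the compact core contributing only diffusively (no linear drift, by the symmetry above). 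The windings in $U_1$ and $U_2$ are exactly the objects analyzed in \cite{Enr-LeJan}: an excursion reaching height $Y$ into a horoball lasts a time of order $\log Y$ while producing a winding of order $Y$, and the invariant measure assigns mass of order $1/Y$ to excursions reaching height $Y$; hence each excursion contributes a winding with a symmetric, index-$1$ tail $P(|W|\ge y)\sim c/y$, while excursions have finite expected duration, so that their number grows linearly in $t$. The $1$-stable limit theorem for sums of i.i.d.\ index-$1$ variables then gives convergence of $\frac1t\sigma(a_t,\cdot)$ to a $1$-stable law, which by the symmetry above is a centered Cauchy distribution $m_c$.

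The main obstacle is the rigorous control of the excursion decomposition underlying \cite{Enr-LeJan}: one must justify the asymptotic independence of successive windings, show that the windings accumulated in the compact core and in the finite transition regions are $o(t)$, and pin down the exponent and tail constant so that the renormalization by $t$ (rather than $\sqrt t$) is the correct one. Since this probabilistic heart is exactly what Enriquez and Le Jan establish, in practice I would quote their theorem and confine the genuinely new work to the geometric dictionary $\sigma(a_t,x)=\theta^1_t+\theta^2_t+o(t)$ and to the symmetry argument isolating the centering.
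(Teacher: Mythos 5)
Your route is in essence the paper's: split $\sigma(a_t,\cdot)$ into the windings performed inside the horoball neighborhoods of the two unfolded cusps plus a remainder, quote Enriquez--Le Jan \cite{Enr-LeJan} for the winding part, and argue that the remainder is negligible. Your time-reversal symmetry is also correct and is the same trick the paper uses (there written as $\sigma_{F}(t,R_{\pi}a_{t}x)=-\sigma_{F}(t,x)$), although the paper needs it only for the remainder, since the limit law of the winding part comes out of \cite{Enr-LeJan} already centered.

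The genuine gap is in how you dispose of the remainder, i.e.\ the part of $\sigma(a_{t},\cdot)$ accumulated outside the unfolded cusps. You assert it is $o(t)$ ``by the symmetry above'', or alternatively fold it into what Enriquez--Le Jan establish; neither is adequate. Symmetry in law does not yield almost-sure sublinearity: sums of i.i.d.\ symmetric Cauchy variables have symmetric laws, yet $S_{n}/n$ does not tend to $0$ --- indeed the full cocycle $\sigma$, which you prove to be symmetric, is exactly a case where $\frac{1}{t}\sigma(a_{t},\cdot)$ does \emph{not} tend to $0$. And the theorem of Enriquez--Le Jan concerns their winding functionals only; identifying the drift cocycle of the cover with those functionals up to an almost-sure $o(t)$ error is precisely the bridging work that is new here, not something one can cite. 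What closes the gap, and is how the paper proceeds, is an ergodic-theorem argument: writing $\sigma=\sigma_{H}+\sigma_{F}$, the remainder $\sigma_{F}$ is a cocycle over the geodesic flow which is \emph{bounded} on $[0,1]\times T^{1}S_{0}$ (excursions above the non-unfolded cusp $\mathscr{C}_{0}$ do not move the index $i$), and has zero mean by the $R_{\pi}$-symmetry; Birkhoff's theorem then gives $\frac{1}{t}\sigma_{F}(t,x)\rightarrow 0$ for $\lambda_{0}$-almost every $x$, and uniform continuity of a compactly supported test function together with dominated convergence lets one replace $\frac{1}{t}\sigma(a_{t},\cdot)$ by $\frac{1}{t}\sigma_{H}(t,\cdot)$ in the limit. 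With that Birkhoff step inserted in place of the appeal to symmetry, your argument coincides with the paper's proof.
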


\begin{proof}
This result comes from the article \cite{Enr-LeJan}  of Enriquez and Le Jan, who consider the geodesic flow on a hyperbolic surface of finite volume and prove that the normalized winding numbers of the flow around the different cusps behave asymptotically as a product of independent centered Cauchy distributions. Let us give more details. We first decompose the cocycle $\sigma$ to count separately the moves occurring above the unfolded cusps, and above their complementary subset.  Remember the partition $S_{0}=L_{0}\sqcup  \bigsqcup_{j=1}^q \mathscr{C}_{j}$ of \ref{sec1.2} and suppose that $(\mathscr{C}_{j})_{j=1, \dots, k}$ are the  unfolded cusps of $S_{0}$. Write $p :S \rightarrow S_{0}$ the covering map, $H=p^{-1}(\bigsqcup _{j\leq k} \mathscr{C}_{j})$ and $F=p^{-1}(L_{0}\cup \bigsqcup_{j>k} \mathscr{C}_{j})$. We then have a decomposition $S=H \sqcup  F$, with $H$ open. 
For $t>0$, $x_{0}\in T^1S_{0}$, $x\in (Tp)^{-1}(x_{0})$,  set $]s_{0},s_{1}[\sqcup  \dots \sqcup  ]s_{2n},s_{2n+1}[= \{s \in ]0,t[, \,xa_{s}\in T^1H\}$, and define  
$$\sigma_{H}(x_{0},t) := \sum^n_{k=0}i(xa_{s_{2k+1}})-i(xa_{s_{2k}}) $$ 
$$ \sigma_{F}(x_{0},t):= \sigma(x_{0},a_{t})-\sigma_{H}(x_{0},t) $$

\bigskip

Hence $\sigma(x_{0},a_{t})=\sigma_{H}(x_{0},t)+\sigma_{F}(x_{0},t)$ where $ \sigma_{H}, \sigma_{F}$ are cocycles on  $X_{0}\times \R$.  The result stated in \cite{Enr-LeJan} implies that the family of probability measures $\frac{1}{t}\sigma_{H}(.,t)_{\star} \lambda_{0}$ weak-$\star$ converges to a centered Cauchy distribution $m_{C}$ on $E_{C}$ as $t$ goes to infinity. We need to show this is also true for the cocycle $\sigma$.

We first check that  for almost every $x\in T^1S_{0}$ , the term $\sigma_{F}(x,t)$ converges to zero as $t$ goes to infinity. Notice that the cocycle $\sigma_{F}$ is bounded on  $T^1S_{0}\times [0,1]$, hence the  ergodicity of the geodesic flow on $(T^1S_{0}, \lambda_{0})$ yields that for  $\lambda_{0}$-almost every $x\in T^1S_{0}$, 
$$\frac{1}{t}\sigma_{F}(x,t) \underset{t\to +\infty}{\longrightarrow} \int_{T^1S_{0}} \sigma_{F}(x,1)\,d\lambda_{0}(x)$$
We now check the limit is zero.  Writing $R_{\pi}$ the  rotation of angle  $\pi$ with fixed base point, we have  by $G$-invariance of $\lambda_{0}$ that 
 $$\int_{T^1S_{0}}\sigma_{F}(x,1)\, d\lambda_{0}(x) = \frac{1}{2}\int_{T^1S_{0}}\sigma_{F}(x,1) +\sigma_{F}(xa_{1}R_{\pi},1) \,d\lambda_{0}(x)  $$
 The paths $(x a_{s})_{s\in [0,1]}$ and $(xa_{1}R_{\pi}a_{s})_{s\in [0,1]}$  travel along the same geodesic arc but in opposite directions, so  they induce opposite variations of index :
 $$\sigma_{F}(x,1)=-\sigma_{F}(xa_{t}R_{\pi},1) $$
 In particular the previous integral is null, and for   $\lambda_{0}$-almost every $x\in T^1S_{0}$, 
$$\frac{1}{t}\sigma_{F}(x,t) \underset{t\to +\infty}{\longrightarrow} 0$$

We now conclude that  $\frac{1}{t}\sigma(.,t)_{\star} \lambda_{0}$ weak-$\star$ converges the distribution $m_{C}$  as $t$ goes to infinity. If $\varphi \in C^0_{c}(\R^d)$ is a continuous function on $\R^d$ with compact support, the previous paragraph,  the uniform continuity of $\varphi$, and the dominated convergence theorem imply that :

\begin{align*}
\int_{T^1S_{0}}\varphi\left(\frac{1}{t}\sigma(x,a_{t})\right)\,d\lambda_{0}(x) &=\int_{T^1S_{0}}\varphi\left(\frac{1}{t}\sigma_{H}(x_{0},t)+\frac{1}{t}\sigma_{F}(x_{0},t)\right)\,d\lambda_{0}(x) \\
&=\int_{T^1S_{0}}\varphi\left(\frac{1}{t}\sigma_{H}(x_{0},t)\right)\,d\lambda_{0}(x) +o(1)\\
&\underset{t\to+\infty }{\longrightarrow} m_{C}(\varphi)
\end{align*}
which finishes the proof of \Cref{Cauchy1}.

\end{proof}

\begin{proof}[Proof of  \Cref{Cauchy}]
The proof relies on  \Cref{Cauchy1} and on the Cartan decomposition of elements in $G$, which provides a strong relation between the  $\mu$-walk and the geodesic flow on $T^1S$.  This decomposition states that every  $g\in G$ can be written as 
$$g=k_{g}a_{t_{g}}l_{g} $$
where $k_{g},l_{g}\in K$, $t_{g}\geq 0$. Moreover the number $t_{g}$ is unique and called the Cartan projection of $g$. 

Fix such a decomposition for each element of $G$ (in a measurable way). For $b\in B$, denote by $b_{1}\dots b_{n}=k_{n}(b)a_{t_{n}(b)}l_{n}(b)$ the decomposition of the product $b_{1}\dots b_{n}$. 

\begin{lemme} Let $\varphi\in C^0_{c}(\R^d)$. Then
$$\nu_{n}(\varphi)=\int_{T^1S_{0} \times B}\varphi\left(\frac{1}{n}\sigma(x,a_{t_{n}(b)} )\right) d\beta(b)d\lambda_{0}(x)$$ 
\end{lemme}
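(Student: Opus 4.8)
The plan is to peel off the two $K$-factors of the Cartan decomposition $b_n\dots b_1 = k_n(b)a_{t_n(b)}l_n(b)$ one at a time, using that the index $i$ is $K$-invariant (it only depends on the base point in $S$) together with the cocycle relation, and then to erase the remaining rotation $l_n(b)$ using the $G$-invariance of $\lambda_0$.

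First I would record the two elementary consequences of the $K$-invariance of $i$. Since $i(kx)=i(x)$ for every $k\in K$ and $x\in T^1S$, the very definition of $\sigma$ gives, for every $\bar x\in T^1S_0$, $g\in G$ and $k\in K$,
$$\sigma(kg,\bar x)=\sigma(g,\bar x), \qquad \sigma(k,\bar x)=0.$$
Applying the first identity with $g=a_{t_n(b)}l_n(b)$ removes the left factor $k_n(b)$, and then the cocycle relation together with the second identity removes the right factor:
$$\sigma(b_n\dots b_1,x)=\sigma(a_{t_n(b)}l_n(b),x)=\sigma(a_{t_n(b)},l_n(b)x)+\sigma(l_n(b),x)=\sigma(a_{t_n(b)},l_n(b)x).$$
Substituting this into the definition of $\nu_n$ yields
$$\nu_n(\varphi)=\int_{B\times T^1S_0}\varphi\Big(\tfrac1n\sigma(a_{t_n(b)},l_n(b)x)\Big)\,d\beta(b)\,d\lambda_0(x).$$

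It then remains to absorb the rotation $l_n(b)$ appearing in the second argument, and I would do this fiberwise through Fubini. For each fixed $b\in B$ the element $l_n(b)\in K\subseteq G$ acts on $T^1S_0=G/\Lambda_0$ by a $\lambda_0$-preserving transformation, so the $G$-invariance of $\lambda_0$ gives
$$\int_{T^1S_0}\varphi\Big(\tfrac1n\sigma(a_{t_n(b)},l_n(b)x)\Big)\,d\lambda_0(x)=\int_{T^1S_0}\varphi\Big(\tfrac1n\sigma(a_{t_n(b)},x)\Big)\,d\lambda_0(x),$$
and integrating this equality against $\beta$ produces exactly the announced formula. The only point requiring a little care — the one I would flag as the main (mild) obstacle — is the measurability needed to apply Fubini, since $t_n$ and $l_n$ depend on $b$. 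This is precisely what the measurable choice of Cartan decomposition fixed above provides: it makes $(b,x)\mapsto\sigma(a_{t_n(b)},l_n(b)x)$ measurable on $B\times T^1S_0$, after which the inner integration in $x$ and the change of variable may be carried out on each $b$-fiber, the outer $\beta$-integration being then immediate.
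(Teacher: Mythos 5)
Your proof is correct and is essentially the paper's own argument: the paper disposes of this lemma in one line, invoking precisely the $K$-invariance of the index $i$ (which kills $k_n(b)$ and, via the cocycle relation, the term $\sigma(l_n(b),x)$) and the $K$-invariance of $\lambda_0$ (which absorbs $l_n(b)$ in the second argument). Your version just writes out these steps explicitly, including the measurability point needed for Fubini, which the paper handles by fixing the Cartan decomposition measurably beforehand.
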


\begin{proof}
This comes from the $K$-invariance of the index of position  $i$ and of the probability measure $\lambda_{0}$.

\end{proof}

Let $\varphi\in C^0_{c}(\R^d)$. Denote by $\lambda_{\mu}>0$ the first Lyapunov exponent of the probability measure $\mu$ on $G$ \cite[Section 4.6]{BQRW}. The previous lemma and Fubini Theorem imply that
\begin{align*}
\nu_{n}(\varphi)&=\int_{B}\nu'_{t_{n}(b)}\left(\varphi(\frac{t_{n}(b)}{n}.) \right) \,d\beta(b) \\
&=\int_{B}\nu'_{t_{n}(b)} \left(\varphi (\lambda_{\mu}.) \right)\,d\beta(b) + \int_{B}\nu'_{t_{n}(b)}\left(\varphi(\frac{t_{n}(b)}{n}.)-\varphi(\lambda_{\mu}.) \right) \,d\beta(b)\\
\end{align*}

The positivity of the first Lyapunov exponent \cite[Corollary 4.32]{BQRW} imply that for $\beta$-almost every $b\in B$,  one has $t_{n}(b)\rightarrow +\infty$ as $n$ goes to infinity. Hence, the dominated convergence theorem and \Cref{Cauchy1} imply that the first term converges
 \begin{align*}
\int_{B}\nu'_{t_{n}(b)}\left(\varphi (\lambda_{\mu}.)\right) \,d\beta(b)\underset{n\to +\infty}{\longrightarrow} (\lambda_{\mu}m_{C})(\varphi) \tag{1}
\end{align*}

Let us check that the second term goes to $0$. Let $\varepsilon>0$. As $\varphi$ has compact support, there exists $\delta>0$ such that for all $\alpha\in [\lambda_{\mu}-\delta, \lambda_{\mu}+\delta]$, one has $||\varphi(\alpha.)-\varphi (\lambda_{\mu}.)||_{\infty}<\varepsilon/2$. By the law of large numbers for the norm  \cite[Lemma 4.27]{BQRW}, there exists an integer $n_{0}\geq 1$, such that if $n\geq n_{0}$, then 
$$\beta\{b\in B, \,\,\frac{t_{n}(b)}{n}\in [\lambda_{\mu}-\delta, \lambda_{\mu}+\delta]\}\geq 1-\frac{\varepsilon}{4||\varphi||_{\infty}} $$

For $n\geq n_{0}$, we deduce that   
\begin{align*}
| \int_{B}\nu'_{t_{n}(b)}\left(\varphi(\frac{t_{n}(b)}{n}.)-\varphi(\lambda_{\mu}. ) \right) \,d\beta(b)| &\leq   \int_{B} ||\varphi(\frac{t_{n}(b)}{n}.)-\varphi(\lambda_{\mu}.)||_{\infty}\,d\beta(b) \\
&\leq \varepsilon/2 + \varepsilon/2\\
&=\varepsilon \tag{2} \\
\end{align*}

Combining (1) and (2), we obtain : 
$$\nu_{n}(\varphi)\underset{n\to +\infty}{\longrightarrow} \lambda_{\mu}m_{C}(\varphi)  $$
which concludes the proof of  \Cref{Cauchy}.

\end{proof}

\bigskip

\subsubsection{Application to the drift and the recurrence}
\bigskip

We now use \Cref{Cauchy} to obtain  Theorems \ref{drift-hypsurf} and \ref{rec-hypsurf}. The two results are proven independently. 

\bigskip

{\bf Drift}
\smallskip

Let us show Theorem \ref{drift-hypsurf}. We first check that \Cref{Cauchy} implies that the set of accumulation points of a  typical sequence $\left(\frac{1}{n}\sigma(x_{0}, b_{1}\dots b_{n})\right)_{n\geq 1} $ contains the vector space $E_{C}$ spanned by the directions of translation above the unfolded cusps. 

\begin{lemme} \label{supEC}

For almost every $x_{0}\in T^1S_{0}$, for $\beta$-almost every $b\in B$, one has 
$$\left\{\text{accumulation points of the sequence $\left(\frac{1}{n}\sigma(x_{0}, b_{1}\dots b_{n})\right)_{n\geq 1}$}\right\} \supseteq E_{C} $$

\end{lemme}

\begin{proof}
We only need to prove that for a fixed $t\in E_{C}$, for almost every $x_{0}\in T^1S_{0}$, and $\beta$-almost every $b\in B$, the sequence $\left(\frac{1}{n}\sigma(x_{0}, b_{1}\dots b_{n})\right)_{n\geq 1}$ has a subsequence converging to $t$. Let $\varepsilon >0$. For $n\geq 1$, set
$$E_{n}=\{(x,b)\in T^1S_{0}\times B, \,\,||\frac{1}{n}\sigma(x,b_{1}\dots b_{n})-t|| \leq \varepsilon\}$$  
 \Cref{Cauchy} implies that the sequence $\left(\lambda_{0}\otimes \beta(E_{n})\right)_{n\geq 1}$ converges toward a positive number $\delta=\delta(t,\varepsilon)>0$. Hence  $\lambda_{0}\otimes \beta(\limsup E_{n})\geq \delta$. 
 
 Set
$$E=\{(x,b)\in  T^1S_{0}\times B, \,\,\liminf_{n\to +\infty} ||\frac{1}{n}\sigma(x, b_{1}\dots b_{n})-t|| \leq \varepsilon\}$$
 As $E\supseteq \limsup E_{n}$, one has  $\lambda_{0}\otimes \beta(E)\geq \delta$. However, the set $E$ is invariant under the transformation $  T^1S_{0}\times B\rightarrow T^1S_{0}\times B, (x,b)\mapsto (xb_{1}, Tb)$ which is measure-preserving and ergodic \cite[Proposition 2.14]{BQRW}. Hence $\lambda_{0}\otimes \beta(E)=1$. Choosing smaller and smaller $\varepsilon$, we obtain that  $\lambda_{0}\otimes \beta$-almost surely, 
$$\liminf_{n\to +\infty} ||\sigma(x, b_{1}\dots b_{n}) -t||=0$$
which concludes the proof. 
\end{proof}
\bigskip

We  now check  the reverse inclusion in  \Cref{drift-hypsurf}.  

\begin{lemme} \label{subEC}
For almost every $x_{0}\in T^1S_{0}$, for $\beta$-almost every $b\in B$, one has 
$$\left\{\text{accumulation points of the sequence $\left(\frac{1}{n}\sigma(x_{0}, b_{1}\dots b_{n})\right)_{n\geq 1}$}\right\} \subseteq E_{C} $$
 \end{lemme}

\begin{proof}

Fix a  subspace $E'\subseteq \R^d$   such that $$\R^d =E'\oplus E_{C} $$
and decompose $\sigma$ into $\sigma=\sigma'+\sigma_{C}$ where $\sigma'$ and $\sigma_{C}$ are cocycles respectively taking values in $E'$ and $E_{C}$. It is sufficient to prove that for almost every $x_{0}\in T^1S_{0}$, for $\beta$-almost every $b\in B$, 
\begin{align*}
\frac{1}{n}\sigma'(x,b_{1}\dots b_{n})\underset{n\to +\infty}{\longrightarrow} 0  \tag{3}
\end{align*}
 Let us first check that the cocycle $\sigma'$ is bounded on the set $T^1S_{0} \times \supp \mu $. As the family of translates $(D+k)_{k\in \Z^d}$ is locally finite, we have for every compact set $L\subseteq T^1S_{0}$,  
$$\sup \{||\sigma(x,g)||,\,\,x\in L,\,g\in \supp \mu\}<\infty$$
and this must remain true for the projection  $\sigma'$. 

We now need to show that $\sigma'(x,g)$ is uniformly bounded as $g$ varies in the support of $\mu$, and $x$ in a cusp of $S_{0}$.  Let $g=k_{g}a_{t_{g}}l_{g}$ be a Cartan decomposition of $g$ and $\gamma_{x,g} : [0, t_{g}]\rightarrow T^1S_{0}, t\mapsto xk_{g}a_{t}l_{g}$. One can see $\sigma(x,g)$ as the  variation of $i$ along a lift $\widetilde{\gamma}_{x,g} : [0, t_{g}]\rightarrow T^1S$ of the path $\gamma_{x,g}$. Choose $x$ high enough in the cusp so that  $\gamma_{x,g}$ in entirely included in the cusp for every $g\in \supp \mu$. As the domain $D$ is assumed to be good (see \ref{fund}), we can complete the path $\gamma_{x,g}$ into a path  $c_{xg}\gamma_{x,g}c_{x}$ contained in the cusp,  with extremal points in  the boundary of the cusp, and in a way that $c_{xg}$, $c_{x}$ lift to continuous paths in  $D$. Finally, complete $c_{xg}\gamma_{x,g}c_{x}$ into a closed path $\delta c_{xg}\gamma_{x,g}c_{x}$ by adding an arc $\delta$ taking values in the  boundary of the cusp, and winding less than once. As the  boundary of the cusp is compact, the variation of $i$ along a lift $\widetilde{\delta}$ in $T^1S$ is bounded above, by a constant $M>0$ that only depends on the cusp at study and the tiling $(D+k)_{k\in \Z^d}$. In conclusion the variation of $i$ along lifts to $T^1S$ of the paths ${\gamma}_{x,g}$ and $\delta c_{xg}\gamma_{x,g}c_{x}$ differ at most by $4 \sup_{D}|i|+ M$. As the second one is in $E_{C}$, we infer that the distance between $\sigma(x,g)$ and $E_{C}$, hence the norm of $\sigma'(x,g)$,  is uniformly bounded as $x$ varies in a cusp of $S_{0}$ and $g$  in the support of $\mu$.

The cocycle $\sigma'$ being bounded on $T^1S_{0}\times \supp \mu $, the ergodicity of the $\mu$-walk on $T^1S_{0}$ for the Haar measure $\lambda_{0}$ reduces the convergence (3)   to showing that $$ \int_{T^1S_{0} \times G}\sigma'(x,g)\,d\lambda_{0}(x)d\mu(g)=0$$
As in the proof of \Cref{derivleb}, this comes from the facts that  $G$ is simple and the map  $G\rightarrow \R^d, g\mapsto \int_{ T^1S_{0}}\sigma'(g,x)\lambda_{0}(x)$ is a group morphism.

\end{proof}

\begin{proof}[Proof of \Cref{drift-hypsurf}]
Combine Lemmas \ref{supEC} and \ref{subEC}. 
\end{proof}

\bigskip

{\bf Recurrence and ergodicity}
\smallskip

Schmidt-Conze Theorem \cite{Schmidt-rec} relates the recurrence of a skew-product to the asymptotic behavior in law of its iterates. The following particular case is  noteworthy. 

\begin{th*}[Schmidt-Conze]
Let $(Z,\mathcal{Z}, \mathbb{P})$ be a probability space, $R:Z\rightarrow Z$  a measure preserving ergodic automorphism. Let $d\in \{1,2\}$ and $f : Z\rightarrow \R^d$ a measurable map such that the sequence $(n^{-1/d}\sum^{n-1}_{k=0} f\circ R^k)_{n\geq 1}$ converges in law toward a  centered Cauchy distribution if $d=1$, or a centered Gaussian distribution if $d=2$.  

Then for $\mathbb{P}$-almost every $z\in Z$,  $$\liminf_{n\to+\infty} ||\sum^{n-1}_{k=0} f\circ R^k(z) ||=0$$ 

\end{th*}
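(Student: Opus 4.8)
The plan is to deduce the stated $\liminf$ from the general theory of recurrence of $\R^d$-valued cocycles over an ergodic base, the only input specific to the hypotheses being the convergence in law, which will be fed into the critical scaling $\sum_n n^{-1}=+\infty$. First I would recast everything in terms of the skew product $T:Z\times\R^d\to Z\times\R^d$, $(z,v)\mapsto(Rz,v+f(z))$, which preserves the $\sigma$-finite measure $m=\mathbb{P}\otimes\mathrm{Leb}$ and whose iterates read $T^n(z,v)=(R^nz,\,v+S_nf(z))$ with $S_nf=\sum_{k=0}^{n-1}f\circ R^k$. Here one invokes Schmidt's dichotomy: over an ergodic base such a cocycle is either \emph{recurrent}, meaning $\liminf_n\|S_nf\|=0$ almost everywhere, equivalently $T$ is conservative, or \emph{transient}, meaning $\|S_nf\|\to+\infty$ almost everywhere, with nothing in between. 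The target conclusion is exactly recurrence, so it suffices to rule out transience.

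The engine for excluding transience is a return-series criterion: if for some $M>0$ one has $\sum_{n\ge1}\mathbb{P}(\|S_nf\|\le M)=+\infty$, then $T$ is conservative. I would establish this by a second-moment (Kochen--Stone type) Borel--Cantelli argument adapted to the $m$-preserving setting. Taking $A=Z\times B(0,M/2)$, a direct computation gives $m(A\cap T^{-n}A)=\int_Z\mathrm{Leb}\big(B(0,M/2)\cap(B(0,M/2)-S_nf(z))\big)\,d\mathbb{P}(z)$, a quantity comparable to $\mathbb{P}(\|S_nf\|\le M)$. The divergence of $\sum_n m(A\cap T^{-n}A)$, together with a control of the pairwise correlations $m(A\cap T^{-n}A\cap T^{-n'}A)$ — where the ergodicity of $R$ on the base and the cocycle identity $S_{n'}f-S_nf=(S_{n'-n}f)\circ R^n$ let one factorise the correlations — forbids a wandering set of positive measure, i.e. yields conservativity, hence recurrence.

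It remains to produce some $M$ with $\sum_n\mathbb{P}(\|S_nf\|\le M)=+\infty$, and this is where $d\in\{1,2\}$ and the precise limit laws enter. The natural scale is $a_n=n^{1/d}$ (so $a_n=n$ in the Cauchy case $d=1$ and $a_n=\sqrt n$ in the Gaussian case $d=2$), for which $a_n^{-1}S_nf$ converges in law to a probability measure $\nu$ that is absolutely continuous with density continuous and strictly positive at the origin. Since $\sum_n a_n^{-d}=\sum_n n^{-1}=+\infty$, a local comparison $\mathbb{P}(\|S_nf\|\le M)\asymp a_n^{-d}=n^{-1}$ would give the divergence immediately. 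I expect this conversion to be the \emph{main obstacle}: bare weak convergence controls $S_nf$ only at the dilating scale $a_n$, yielding $\mathbb{P}(\|S_nf\|\le\varepsilon a_n)\to\nu(B(0,\varepsilon))>0$ (returns to a growing ball), whereas what is needed is the divergence of returns to a \emph{fixed} ball, and the borderline nature of $\sum n^{-1}$ leaves no slack — the same statement fails for $d\ge3$ or for heavier tails, so the argument must use criticality essentially.

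Concretely I would attack this step by a scale-by-scale second-moment estimate: partitioning a large ball $B(0,\varepsilon a_N)$ into $\asymp(a_N/M)^d$ cells of diameter $\asymp M$, a Cauchy--Schwarz count of pairs of times $m<n\le N$ landing in a common cell — combined with the $R$-stationarity of the increments $(S_nf-S_mf)=(S_{n-m}f)\circ R^m$ — produces lower bounds on $\sum_{n}\mathbb{P}(\|S_nf\|\le 2M)$; the delicate point, to which I would devote the most care, is organising these estimates simultaneously over dyadic time-blocks $2^k\le N$ so that the bounded-below partial sums genuinely accumulate to $+\infty$, which is possible precisely because $d\le2$ makes the governing exponent critical. (Alternatively, when a local limit theorem is available in the model at hand, one proves $\mathbb{P}(\|S_nf\|\le M)\asymp n^{-1}$ directly and bypasses this subtlety.) Granting this divergence, the criterion of the previous paragraph excludes transience, and Schmidt's dichotomy then delivers $\liminf_{n\to+\infty}\|S_nf(z)\|=0$ for $\mathbb{P}$-almost every $z$.
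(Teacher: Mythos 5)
First, a point of reference: the paper does not prove this statement at all --- it is quoted as a black box from the literature (\cite{Schmidt-rec}, the stationary Chung--Fuchs-type recurrence theorem of Schmidt and Conze) and then applied. So your attempt is measured against the published proof, not against anything internal to the paper. Your skeleton is the right shape: skew product $T(z,v)=(Rz,v+f(z))$, Schmidt's recurrence/transience dichotomy over an ergodic base, and the reduction to ruling out transience using the criticality $\sum_n n^{-1}=+\infty$. You are also right, and commendably explicit, about where the difficulty sits. But two of the steps you rely on are genuine gaps, and one of them fails as described.

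The step that fails is the Kochen--Stone argument for the conservativity criterion. You propose to control the pair correlations $m(A\cap T^{-n}A\cap T^{-n'}A)$ by letting ``the ergodicity of $R$ on the base and the cocycle identity factorise the correlations.'' Ergodicity provides no decorrelation whatsoever: the base could be an irrational rotation, for which the events $\{\|S_nf\|\le M\}$ and $R^{-n}\{\|S_{n'-n}f\|\le 2M\}$ can be maximally correlated for all $n,n'$. The cocycle identity only rewrites the pair event as an intersection of one event with a shifted one; to run a second-moment Borel--Cantelli you would still need an inequality of the type $\mathbb{P}\bigl(A_n\cap R^{-n}B_{n'-n}\bigr)\lesssim \mathbb{P}(A_n)\,\mathbb{P}(B_{n'-n})$, which is a mixing hypothesis, not a consequence of ergodicity. (The published proofs of this criterion exploit the group-extension structure instead: the commutation of $T$ with the fibre translations, wandering sets and the Hopf decomposition.) The second gap is the one you flag yourself: converting weak convergence of $n^{-1/d}S_nf$ into divergence of $\sum_n\mathbb{P}(\|S_nf\|\le M)$ for a \emph{fixed} $M$. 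Your dyadic second-moment scheme is not carried out, and if one tries it, the criticality cuts both ways: the first and second moments of the relevant visit counts come out of the same order, so Cauchy--Schwarz/Paley--Zygmund return only trivial bounds. Bare weak convergence gives no local, fixed-scale lower bound on $\mathbb{P}(\|S_nf\|\le M)$ --- no local limit theorem holds in this generality, and your parenthetical fallback assuming one changes the hypotheses. Since this conversion is essentially the whole content of the Schmidt--Conze theorem, your proposal as written reduces the statement to two unproved claims, one of which is attacked by a method that cannot work.
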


\bigskip

We use \Cref{Cauchy} and Schmidt-Conze Theorem to obtain the first half of \Cref{rec-hypsurf}.

\begin{lemme}\label{rec}
If  $d=1$, or $d=2$ and $\dim E_{C}=0$, then the $\mu$-walk on $T^1S$ is recurrent and ergodic.

\end{lemme}

\begin{proof}

We first use Schmidt-Conze Theorem to check that   for almost every $x\in T^1S_{0}$, $\beta$-almost every $b\in B$, the sequence $\sigma(x, b_{1}\dots b_{n})$ sub-converges to zero :
\begin{align*}
\liminf_{n\to +\infty} ||\sigma(x, b_{1}\dots b_{n})||=0 \tag{4}
\end{align*}

Set $Z=T^1S_{0}\times G^\Z$, $\mathcal{Z}$ the product $\sigma$-algebra, $\mathbb{P}=  \lambda_{0} \otimes \mu^{\otimes \Z}$, and consider the automorphism $R:Z\rightarrow Z, (x,b)\mapsto (xb_{1},Tb)$ where $T$ stands for the two-sided shift $G^\Z$, i.e. $T : (b_{i})_{\in \Z} \mapsto (b_{i+1})_{i\in \Z}$.  The map $T$ is a measure-preserving ergodic automorphism of $(Z, \mathcal{Z}, \mathbb{P})$. Define also $f :Z\rightarrow \R^d,\, (x,b)\mapsto \sigma(x, b_{1})$ and notice that $$\sigma(x,b_{1}\dots b_{n})=\sum^{n-1}_{k=0} f\circ R^k(x,b)$$

If $d\in \{1,2\}$,  and $\dim E_{C}=0$, then the law of the variable  $(T^1S_{0}, \lambda_{0}) \rightarrow \R^d, x\mapsto  \frac{1}{\sqrt{t}}\sigma(x,a_{t})$ is known to converge to a centered Gaussian distribution on $\R^d$ as $t$ goes to infinity  \cite[pages 3,5,6]{SarLed}. As in \Cref{Cauchy}, we infer that the  law of the normalized drift cocycle  $(T^1S_{0}\times B, \lambda_{0}\otimes \beta) \rightarrow \R^d, (x,b)\mapsto \frac{1}{\sqrt{n}}\sigma(x,b_{1}\dots b_{n})$ also converges to a centered Gaussian distribution on $\R^d$ when $n$ goes to $+\infty$.  Schmidt-Conze Theorem  (possibly in the case of a degenerate Gaussian law on $\R^2$) yields (4).

If $d=\dim E_{C}=1$, the convergence (4) also holds, by the same argument,  combining \Cref{Cauchy} and Schmidt-Conze Theorem. 

\bigskip
 
We now infer the $\mu$-walk  on $T^1S$ is almost everywhere recurrent. Let $U\subseteq \R^d$ be an open ball  and  $\Omega:=i^{-1}(U)\subseteq T^1S$. According to the previous paragraph, almost every trajectory of the $\mu$-walk starting in $\Omega$ comes back infinitely many times to $\Omega$. This allows to define the first return random walk on $\Omega$, and the Haar measure restricted to $\Omega$ is stationary for this walk. As $\Omega$ has finite volume,  Poincaré Recurrence Theorem implies that the first return walk is recurrent on $\Omega$, which yields the recurrence of the $\mu$-walk on $T^1S$ for almost every starting point in $\Omega$. As the open ball $U$ is arbitrary,  we conclude that the $\mu$-walk is recurrent on $T^1S$.

Finally, we prove the ergodicity of the $\mu$-walk for the Haar measure $\lambda$ on $T^1S$. First notice that for any $c\in \R\setminus \{0\}$, the action of the discretized geodesic flow $a_{c}$ is  recurrent for almost every starting point on  $T^1S$ (this is proven in \cite{Rees81} when $S_{0}$ is compact, for the general case, argue  as above, applying Schmidt-Conze Theorem and \cite{SarLed} to  the geodesic flow instead of  random walks). By Hopf dichotomy, the action of $a_{c}$ must be $\lambda$-ergodic as well (see \cite[Theorem 7.4.3]{Aar} or \cite[lemme 4.4.2]{Tim-these}). As the semigroup $\Gamma_{\mu}$ generated by  the support of $\mu$ is Zariski-dense in $G$, it contains some loxodromic element $g_{0}$ (see \cite[Proposition 6.11]{BQRW}). By definition, this element can be written as a conjugate $g_{0} =g a_{c}g^{-1}$ where $g\in G$, $c\neq 0$.  Hence the action of $g_{0}$ on $T^1S$, and a priori the action of $\Gamma_{\mu}$, is also $\lambda$-ergodic. This proves the ergodicity of the $\mu$-walk on $T^1S$. 
  \end{proof}

\bigskip

\begin{lemme}\label{trans}
If  $(d, \dim E_{C})\notin \{(1,0), (1,1), (2,0)\}$ then the $\mu$-walk on $T^1S$ is transient.

\end{lemme}

\begin{proof}
The key is the local limit theorem for the geodesic flow  on $T^1S$  \cite{PanOh, Panlocalmixing}, according to which  there exists a constant $c_{0}>0$ such that for any open subsets $U,V\subseteq T^1S$, 
$$\lambda(Ua_{t}\cap V)\underset{t\to +\infty}{\sim}c_{0}\lambda(U)\lambda(V) \,t^{-\frac{1}{2}(d +\dim E_{C})}$$

We fix open subsets $U, V$ that are both right $K$-invariant and bounded, and we show that  for $\lambda$-almost every starting point $x\in U$, $\beta$-almost every $b\in B$, the sequence $(x b_{1}\dots b_{n})_{n\geq 0}$ meets $V$ only finitely many times. As in the proof of \Cref{Cauchy},  we choose for every $b\in B$, $n\geq 0$,  a Cartan decomposition $b_{1}\dots b_{n}=k_{n}(b)a_{t_{n}(b)}l_{n}(b)$  of the product $b_{1}\dots b_{n}$. By $K$-invariance of $U$, $V$ and $G$-invariance of the Haar measure $\lambda$, we have for $b\in B$, 

\begin{align*}
\int_{U}\sum_{n\geq0}1_{V}(x b_{1}\dots b_{n}) d\lambda(x) &= \sum_{n\geq 0}\int_{U}1_{V}(x a_{t_{n}(b)}) d\lambda(x)\\
&= \sum_{n\geq 0} \lambda(Ua_{t_{n}(b)}\cap V) 
\end{align*}

By the law of large numbers, we have for $\beta$-almost every $b\in B$ the asymptotic equivalence 
$t_{n}(b) \underset{n\to +\infty}{\sim}  n\lambda_{\mu}$ where $\lambda_{\mu}>0$ is the Lyapunov exponent of the walk (see \cite{BQRW} Lemma 4.27 and Corollary 4.32).
Hence, $$ \lambda(Ua_{t_{n}(b)}\cap V) \underset{n\to +\infty}{\sim}  c_{0}\lambda(U)\lambda(V) \,(n\lambda_{\mu})^{-\frac{1}{2}(d +\dim E_{C})}$$

As the assumptions of  \Cref{trans} mean that $d+\dim E_{C}\geq 3$, we infer that 
$$\int_{U}\sum_{n\geq0}1_{V}(x b_{1}\dots b_{n}) d\lambda(x) <\infty $$
In particular, for $\lambda$-almost every starting point $x\in U$, $\beta$-almost every $b\in B$, the sequence $(x b_{1}\dots b_{n})_{n\geq 0}$ meets $V$ only finitely many times, hence   the $\mu$-walk on  $T^1S$ is transient.  
\end{proof}

\begin{proof}[Proof of \Cref{rec-hypsurf}]
Combine Lemmas \ref{rec} and \ref{trans}. 
\end{proof}

\bibliographystyle{abbrv}

\bibliography{bibliographie}

\end{document}